\begin{document}


\newtheorem{theorem}{Theorem}[section]
\newtheorem{definition}[theorem]{Definition}
\newtheorem{lemma}[theorem]{Lemma}
\newtheorem{proposition}[theorem]{Proposition}
\newtheorem{corollary}[theorem]{Corollary}
\newtheorem{example}[theorem]{Example}
\newtheorem{remark}[theorem]{Remark}


\pagestyle{plain}
\title{{P}rojections in noncommutative tori and Gabor frames}

\author{Franz Luef} 
\thanks{The author was supported by the Marie Curie Excellence grant MEXT-CT-2004-517154 and the Marie Curie Outgoing Fellowship PIOF-220464.}
\address{Current address: Department of Mathematics\\ University of California\\Berkeley CA 94720-3840}
\email{luef@math.berkeley.edu}
\address{Fakult\"at f\"ur Mathematik\\Universit\"at Wien\\
Nordbergstrasse 15\\ 1090 Wien\\ Austria\\}
\email{franz.luef@univie.ac.at}
\keywords{Gabor frames, noncommutative tori, projections in $C^*$-algebras}
\subjclass{Primary 42C15, 46L08; Secondary 43A20, 22D25}
\begin{abstract}
We describe a connection between two seemingly different problems: (a) the construction of projections in noncommutative tori, (b) the construction of tight Gabor frames for $L^2(\mathbb{R})$. 
The present investigation relies an interpretation of projective modules over noncommutative tori in terms of Gabor analysis. The 
main result demonstrates that Rieffel's condition on the existence of projections in noncommutative tori is equivalent to the Wexler-Raz biorthogonality relations for tight Gabor frames. Therefore we are able to invoke results on the existence of Gabor frames in the construction of  projections in noncommutative tori. In particular, the projection associated with a Gabor frame generated by a Gaussian turns out to be Boca's projection. Our approach to Boca's projection allows us to characterize the range of existence of Boca's projection. The presentation of our main result provides a natural approach to the Wexler-Raz biorthogonality relations in terms of Hilbert $C^*$-modules over noncommutative tori. 
\end{abstract}
\maketitle \pagestyle{myheadings} \markboth{F. Luef}{Projections in noncommutative tori and Gabor frames}
\thispagestyle{empty}
\section{Introduction}
Projections in $C^*$-algebras and von Neumann algebras are of great relevance for the exploitation of its structures. Von Neumann algebras contain an abundance of projections. The question of existence of projections in a $C^*$-algebra is a non-trivial task and the answer to this question has many important consequences, e.g. for the $K$-theory of $C^*$-algebras. Therefore many contributions to $C^*$-algebras deal with the existence and construction of projections in various classes of $C^*$-algebras. In the present investigation we focus on the construction of projections in noncommutative tori $\mathcal{A}_\theta$ for a real number $\theta$. Recall that $\mathcal{A}_\theta$ is the universal $C^*$-algebra generated by two unitaries $U_1$ and $U_2$ which satisfy the following commutation relation 
\begin{equation}\label{commrelation}
  U_2U_1=e^{2\pi i\theta}U_1U_2.
\end{equation}
In the seminal paper \cite{ri81} Rieffel constructed projections for noncommutative tori $\mathcal{A}_\theta$ with $\theta$ irrational and drew some consequences for the $K$-theory of $\mathcal{A}_\theta$, e.g. that the projections in $\mathcal{A}_\theta$ generate $K_0(\mathcal{A}_\theta)$. 
\par
The main goal of this study is to show that Rieffel's construction of projections in noncommutative tori is intimately related to the existence of Gabor frames for $L^2(\mathbb{R})$. A {\it Gabor system} is a collection of functions $\mathcal{G}(g,\Lambda)=\{\pi(\lambda)g:\lambda\in\Lambda\}$ in $L^2(\mathbb{R})$, where $g$ is a function in $L^2(\mathbb{R})$, $\Lambda$ is a lattice in $\mathbb{R}^2$, and $\pi(\lambda)g$ is the time-frequency shift by $\lambda\in\Lambda$ of $g$. For  $z=(x,\omega)$ in $\mathbb{R}^2$ we denote by $\pi(z)=M_\omega T_x$ the  {\it time-frequency shift}, where $T_x$ denotes the {\it translation operator} $T_xg(t)=g(t-x)$ and $M_\omega$ the {\it modulation operator} $M_{\omega}g(t)=e^{2\pi it\cdot\omega}g(t)$. A Gabor system $\mathcal{G}(g,\Lambda)$ is a  {\it Gabor frame} for $L^2(\mathbb{R})$, if there exist $A,B>0$ such that 
\begin{equation}\label{GaborFrame}
  A\|f\|_2^2\le\sum_{\lambda\in\Lambda}|\langle f,\pi(\lambda)g\rangle|^2\le B\|f\|_2^2
\end{equation} 
holds for all $f\in L^2(\mathbb{R})$. The field of Gabor analysis is a branch of time-frequency analysis that has its origins in the seminal paper \cite{ga46} of the Nobel laureate D. Gabor. We refer the interested reader to \cite{gr01} for an excellent introduction to Gabor analysis.   
\par
Gabor frames $\mathcal{G}(g,\mathbb{Z}\times\theta\mathbb{Z})$ are intimately related with noncommutative tori $\mathcal{A}_\theta$. Namely the two unitaries $U_1=T_{1}$ and $U_2=M_{\theta}$ provide a faithful representation of $\mathcal{A}_\theta$ on $\ell^2(\mathbb{Z}^2)$, because $T_1$ and $M_\theta$ satisfy the commutation relation from Eq. \eqref{commrelation}:
\begin{equation}\label{TM-ComRel}
  M_\theta T_1=e^{2\pi i\theta}T_1 M_\theta.
\end{equation}
The construction of projections in \cite{ri81} relies on the existence of a $C^*$-algebra $\mathcal{B}$ that is Morita-Rieffel equivalent to $\mathcal{A}_\theta$ through an equivalence bimodule $_{\mathcal{A}_\theta} V_\mathcal{B}$. In \cite{co80} and \cite{ri81} Connes and Rieffel determined the class of $C^*$-algebras that are Rieffel-Morita equivalent to $\mathcal{A}_\theta$. Most notably the opposite algebra of $\mathcal{A}_{1/\theta}$ is Morita-Rieffel equivalent to $\mathcal{A}_\theta$. In \cite{lu07,lu09} we were able to link this important result with Gabor analysis, which allows us to interpret Rieffel's condition on the existence of 
projections in $\mathcal{A}_\theta$ as the Wexler-Raz duality biorthogonality relations for tight Gabor frames. 
\par
The Wexler-Raz duality biorthogonality relations were first discussed in the finite-dimensional setting \cite{rawe90}. The extension of the results in \cite{rawe90} to the infinite-dimensional setting was the main impetus of several groups of mathematicians in time-frequency analysis and it led to the development of the duality theory of Gabor analysis \cite{dalala95,ja95,rosh97}. We follow the work of Janssen in \cite{ja95}, since it provides the most natural link to Rieffel's work on projective modules over noncommutative tori \cite{ri88}. 
\par
The projections in $\mathcal{A}_\theta$ generated by Gaussians were studied by Boca in \cite{bo99-2}. Manin showed that Boca's projections are  quantum theta functions \cite{ma01-1,ma04-2} and a better understanding of these projections is of great relevance for Manin's real multiplication program \cite{ma04-1}. Recently we have presented a time-frequency approach to quantum theta functions in \cite{luma09}. 
\par
The paper is organized as follows: In Section $2$ we present our approach to equivalence bimodules between noncommutative tori and its link with Gabor analysis. We continue with a discussion of Rieffel's projections in noncommutative tori and prove our main results in Section $3$. In the final section we extend the results of Section $3$ to the setting of higher-dimensional noncommutative tori.
\section{Projective modules over noncommutative tori}

In this section we present the construction of projective modules over noncommutative tori \cite{co80,ri88}, its interpretation in terms of Gabor analysis and its extension demonstrated in \cite{lu09}. 
\subsection{Basics on noncommutative tori}
We start with the observation that $z\mapsto\pi(z)$ is a projective representation of $\mathbb{R}^2$ on $L^2(\mathbb{R})$, i.e. we have
\begin{equation}\label{commrel}
  \pi(z)\pi(z')=e^{2\pi ix\cdot\eta}\pi(z+z')~~\text{for}~~z=(x,\omega),z'=(y,\eta)~~\text{in}~~\mathbb{R}^2.
\end{equation} 
We denote the $2$-cocycle in the preceding equation by $c(z,z')=e^{2\pi ix\cdot\eta}$.
The relation in \eqref{commrel} relies on the canonical commutation relation for $M_\omega$ and $T_x$:
\begin{equation}\label{TMcommrel}
  M_\omega T_x=e^{2\pi ix\cdot\omega}T_xM_\omega~~\text{for}~~z=(x,\omega)\in\mathbb{R}^2.
\end{equation}
An application of \eqref{TMcommrel} to the left-hand side of \eqref{commrel} gives a commutation relation for time-frequency shifts:
\begin{equation}\label{commrelsymp}
  \pi(z)\pi(z')=c_{\mathrm{symp}}(z,z')\pi(z')\pi(z),~~z=(x,\omega),z'=(y,\eta)\in\mathbb{R}^2,
\end{equation}
where $c_{\mathrm{symp}}(z,z')=c(z,z')\overline{c(z',z)}=e^{2\pi i(y\cdot\omega-x\cdot\eta)}$ denotes the symplectic bicharater. The term in the exponential of $c_{\mathrm{symp}}$ is the standard symplectic form $\Omega$ of $z=(x,\omega)$ and $z'=(y,\eta)$.  
\par
For our purpose it is useful to view the noncommutative torus $\mathcal{A}_\theta$ as twisted group $C^*$-algebra $C^*(\Lambda,c)$ of a lattice $\Lambda$ in $\mathbb{R}^2$. Recall that $C^*(\Lambda,c)$ is the enveloping $C^*$-algebra of the involutive twisted group algebra $\ell^1(\Lambda,c)$, which is $\ell^1(\Lambda)$ with {\it twisted convolution} $\natural$ as multiplication and $^*$ as involution. More precisely, let ${\bf a}=(a(\lambda))_\lambda$ and ${\bf b}=(b(\lambda))_\lambda$ be in $\ell^1(\Lambda)$. Then the {\it twisted convolution} of ${\bf a}$ and ${\bf b}$ is defined by
\begin{equation}
  {\bf a}\natural{\bf b}(\lambda)=\sum_{\mu\in\Lambda}a(\mu)b(\lambda-\mu)c(\mu,\lambda-\mu)~~\text{for}~~\lambda,\mu\in\Lambda,
\end{equation} 
and {\it involution} ${\bf a}^*=\big(a^*(\lambda)\big)$ of ${\bf a}$ is given by
\begin{equation}
  a^*(\lambda)=\overline{c(\lambda,\lambda)a(-\lambda)}~~\text{for}~~\lambda\in\Lambda.
\end{equation}
Let $\Lambda$ be a lattice in $\mathbb{R}^2$. Then the restriction of the projective representation to $\Lambda$ in $\mathbb{R}^2$ gives that  $\lambda\mapsto\pi(\lambda)$ is a projective representation of $\Lambda$ on $\ell^2(\Lambda)$. Furthermore, this projective representation of a lattice $\Lambda$ in $\mathbb{R}^{2}$ gives a non-degenerate involutive representation of $\ell^1(\Lambda,c)$ on $\ell^2(\Lambda)$ by
\begin{equation*}
  \pi_\Lambda({\bf a}):=\sum_{\lambda\in\Lambda}a(\lambda)\pi(\lambda)~~\text{for}~~{\bf a}=(a(\lambda))\in\ell^1(\Lambda),
\end{equation*}
i.e. $\pi_\Lambda({\bf a}\natural{\bf b})=\pi_\Lambda({\bf a})\pi_\Lambda({\bf b})$ and $\pi_\Lambda(a^*)=\pi_\Lambda({\bf a})^*$. Moreover, this involutive representation of $\ell^1(\Lambda,c)$ is {\it faithful}: $\pi_\Lambda({\bf a})=0$ implies ${\bf a}=0$ for ${\bf a}\in\ell^1(\Lambda)$, see e.g. \cite{ri88}. 
\par
In Rieffel's classification of projective modules over noncommutative tori \cite{ri88} a key insight was the relevance of a lattice $\Lambda^\circ$ associated to $\Lambda$:  
\begin{equation}
  \Lambda^\circ=\{(x,\omega)\in\mathbb{R}^2: c_{\mathrm{symp}}\big((x,\omega),\lambda\big)=1~~\text{for all}~~\lambda\in\Lambda\}
\end{equation}
or equivalently by
\begin{equation}
  \Lambda^\circ=\{z\in\mathbb{R}^2:\pi(\lambda)\pi(z)=\pi(z)\pi(\lambda)~~\text{for all}~~\lambda\in\Lambda\}.
\end{equation}
Following Feichtinger and Kozek we call $\Lambda^\circ$ the {\it adjoint lattice} \cite{feko98}. The lattices $\Lambda$ and $\Lambda^\circ$ are the key players in the duality theory of Gabor analysis, i.e. the Janssen representation of Gabor frames, Wexler-Raz biorthogonality relations and the Ron-Shen duality principle \cite{dalala95,feko98,ja95,rosh97}. 
\par
In the following we want to study weighted analogues of the twisted group algebra. For $s\ge 0$ let $\ell^1_s(\Lambda)$ be the space of all sequences ${\bf a}$ with $\|{\bf a}\|_{\ell^1_s}=\sum |a(\lambda)|(1+|\lambda|^2)^{s/2}<\infty$ we consider $(\ell^1_s(\Lambda),\natural,*)$. More explicitly, 
\begin{equation*}
  \mathcal{A}^1_s(\Lambda,c)=\{A\in\mathcal{B}(L^2(\mathbb{R})):A=\sum_{\lambda}a(\lambda)\pi(\lambda),\|{\bf a}\|_{\ell^1_s}<\infty\}
\end{equation*}
is an involutive Banach algebra with respect to the norm $\|A\|_{\mathcal{A}^1_s(\Lambda)}=\sum_{\lambda}|a(\lambda)|(1+|\lambda|^2)^{s/2}$. Note that $\mathcal{A}^1_s(\Lambda,c)$ is a dense subalgebra of ${C^*(\Lambda,c)}$. 
The {\it smooth noncommutative torus} $\mathcal{A}^{\infty}(\Lambda,c)=\bigcap_{s\ge 0}\mathcal{A}^1_{s}(\Lambda,c)$ and $\mathcal{A}^{\infty}(\Lambda,c)$ is an involutive Frechet algebra with respect to $\natural$ and $\ast$ whose topology is defined by a family of submultiplicative norms $\{\|.\|_{\mathcal{A}^1_{s}}|s\ge 0\}$: 
\begin{equation*}
  \|A\|_{\mathcal{A}^1_{s}}=\sum_{\lambda\in\Lambda}|a(\Lambda)|(1+|\lambda|^2)^{s/2}~~\text{for}~~A\in\mathcal{A}^\infty_s(\Lambda,c).
\end{equation*} 
In other words $\mathcal{A}^{\infty}(\Lambda,c)$ is the the image of ${\bf a}\mapsto\pi_\Lambda({\bf a})$ for ${\bf a}\in\mathscr{S}(\Lambda)$, where $\mathscr{S}(\Lambda)$ denotes the space of rapidly decreasing sequences on $\Lambda$. The smooth noncommutative torus $\mathcal{A}^{\infty}(\Lambda,c)$ is the prototype example of a noncommutative manifold \cite{co80,co94-1}. 
\par
Recall that a unital subalgebra $\mathcal{A}$ of a unital $C^*$-algebra $\mathcal{B}$ with common unit is called {\it spectrally invariant}, if for  $A\in\mathcal{A}$ with $A^{-1}\in\mathcal{B}$ one actually has that $A^{-1}\in\mathcal{A}$. 
\begin{proposition}\label{SpectInvNCT}
Let $\Lambda$ be a lattice in $\mathbb{R}^2$. Then $\mathcal{A}^1_s(\Lambda,c)$ and $\mathcal{A}^\infty(\Lambda,c)$ are spectrally invariant subalgebras of $C^*(\Lambda,c)$. Consequently, $\mathcal{A}^1_s(\Lambda,c)$ and $\mathcal{A}^\infty(\Lambda,c)$ are invariant under holomorphic function calculus.
\end{proposition}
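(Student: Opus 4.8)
The plan is to deduce both assertions from a noncommutative Wiener lemma for weighted twisted convolution algebras, and then to read off the holomorphic functional calculus statement as a soft consequence. Via the faithful representation $\pi_\Lambda$, the Banach $*$-algebra $\mathcal{A}^1_s(\Lambda,c)$ is, by definition, the isometric image of $(\ell^1_{v_s}(\Lambda),\natural,{}^*)$ with the polynomial weight $v_s(\lambda)=(1+|\lambda|^2)^{s/2}$, and $C^*(\Lambda,c)$ is its $C^*$-completion, the two algebras sharing the common unit $I=\pi(0)$. Hence it suffices to prove that $\ell^1_{v_s}(\Lambda,c)$ is \emph{inverse-closed} in $C^*(\Lambda,c)$: if $A=\pi_\Lambda(\mathbf a)$ with $\mathbf a\in\ell^1_{v_s}(\Lambda)$ is invertible in the $C^*$-algebra, then its inverse again lies in $\mathcal{A}^1_s(\Lambda,c)$.

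First I would record the two features of the weight that drive the argument. The weight $v_s$ is equivalent to the submultiplicative weight $\lambda\mapsto(1+|\lambda|)^s$ on $\Lambda\cong\mathbb{Z}^2$ — one has $1+|\lambda|^2\le(1+|\lambda|)^2\le 2(1+|\lambda|^2)$ and $1+|\lambda+\mu|\le(1+|\lambda|)(1+|\mu|)$ — so that $\ell^1_{v_s}(\Lambda,c)$ is indeed a Banach $*$-algebra. Moreover $v_s$ satisfies the Gelfand--Raikov--Shilov condition $\lim_{n\to\infty}v_s(n\lambda)^{1/n}=1$ for every $\lambda\in\Lambda$, since $v_s$ grows only polynomially. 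With these two properties in place, the inverse-closedness of $\ell^1_{v_s}(\Lambda,c)$ in $C^*(\Lambda,c)$ is exactly the Gr\"ochenig--Leinert form of Wiener's lemma for twisted convolution (whose proof proceeds through the symmetry of $\ell^1_{v_s}(\Lambda,c)$ together with Hulanicki's lemma, equivalently through Baskakov's almost-diagonalization of the associated matrices); I would invoke this theorem rather than reprove it. This establishes spectral invariance of $\mathcal{A}^1_s(\Lambda,c)$.

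For the smooth torus I would use that a decreasing intersection of inverse-closed unital subalgebras with a common unit is again inverse-closed: if $A\in\mathcal{A}^\infty(\Lambda,c)=\bigcap_{s\ge 0}\mathcal{A}^1_s(\Lambda,c)$ is invertible in $C^*(\Lambda,c)$, then $A^{-1}\in\mathcal{A}^1_s(\Lambda,c)$ for every $s\ge 0$ by the case already handled, whence $A^{-1}\in\mathcal{A}^\infty(\Lambda,c)$. The final clause follows by the standard mechanism: spectral invariance forces the spectrum of $A$ computed in the subalgebra to coincide with $\mathrm{spec}_{C^*(\Lambda,c)}(A)$, so for $f$ holomorphic on a neighbourhood of that spectrum the Riesz--Dunford integral $(2\pi i)^{-1}\oint f(z)(zI-A)^{-1}\,dz$ makes sense inside the subalgebra — the resolvent stays there and varies continuously along the contour in the Banach, respectively Fr\'echet, topology — so $f(A)$ remains in $\mathcal{A}^1_s(\Lambda,c)$, respectively $\mathcal{A}^\infty(\Lambda,c)$.

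The genuinely hard point is the inverse-closedness step; everything else is either the bookkeeping of identifying the algebras and verifying the two weight conditions, or a routine functional-analytic deduction. I would therefore carry out the weight estimates explicitly and rely on the Gr\"ochenig--Leinert Wiener lemma (and, for the passage from the twisted convolution norm to the operator norm, on the faithfulness of $\pi_\Lambda$ on $C^*(\Lambda,c)$, which rests on amenability of $\mathbb{Z}^2$) for the crucial implication.
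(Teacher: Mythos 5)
Your proposal is correct and follows essentially the route the paper itself takes: the paper gives no proof of Proposition \ref{SpectInvNCT} but rests it on exactly the Gr\"ochenig--Leinert Wiener lemma for twisted convolution with GRS weights (\cite{grle04,gr07}) for $\mathcal{A}^1_s(\Lambda,c)$, which is your key step, and your verification of the weight conditions plus the Riesz--Dunford argument for holomorphic functional calculus is the standard bookkeeping. The only cosmetic difference is that the paper attributes the smooth case to Connes \cite{co80}, whereas you obtain it by intersecting the inverse-closed algebras $\mathcal{A}^1_s(\Lambda,c)$ over $s\ge 0$ — an equally valid and slightly more self-contained deduction.
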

The spectral invariance of $\mathcal{A}^\infty(\Lambda,c)$ in  $C^*(\Lambda,c)$ was demonstrated by Connes in \cite{co80} and the case of $\Lambda=\alpha\mathbb{Z}\times\beta\mathbb{Z}$ for rational lattice constants $\alpha$ and $\beta$ was rediscovered by Janssen in the content of Gabor analysis \cite{ja95}. The connection between the work of Connes and Janssen was pointed out in \cite{lu06}. The extension of Janssen's result to lattices with irrational lattice constants was the motivation of Gr\"ochenig and Leinert to prove the spectral invariance of $\mathcal{A}^1_s(\Lambda,c)$ in $C^*(\Lambda,c)$ in \cite{grle04}, see also \cite{gr07}. 
\subsection{Modulation spaces and Hilbert $C^*(\Lambda,c)$-modules}

The construction of Hilbert $C^*(\Lambda,c)$-modules is based on a class of function spaces introduced by Feichtinger in \cite{fe83-4}, the so-called {\it modulation spaces}. In the last two decades modulation spaces have found many applications in harmonic analysis and time-frequency analysis, see the interesting survey article \cite{fe06} for an extensive bibliography. We briefly recall the definition and basic properties of a special class of modulation spaces, $M^1_s(\mathbb{R})$, since these provide the correct framework for our investigation.
\par
If $g$ is a window function in $L^2(\mathbb{R})$, then the {\it short-time Fourier transform} (STFT) of a function or distribution $f$ is defined by 
\begin{equation}
  V_gf(x,\omega)=\langle f,\pi(x,\omega)g\rangle=\int_{\mathbb{R}}f(t)\overline{g}(t-x)e^{-2\pi ix\cdot\omega}dt.
\end{equation}
The STFT $V_gf$ of $f$ with respect to the window $g$ measures the time-frequency content of a function $f$. Modulation spaces are classes of function spaces, where the norms are given in terms of integrability or decay conditions of the STFT. 
\par
If the window function is the Gaussian $\varphi(t)=e^{-\pi t^2}$, then the modulation space $M^1_s(\mathbb{R})$ is the space 
\begin{equation*}
  M^1_s(\mathbb{R})=\{f\in L^2(\mathbb{R}): \|f\|_{M^1_s}:=\int_{\mathbb{R}}|V_\varphi f(x,\omega)|(1+|x|^2+|\omega|^2)^{s/2}dxd\omega<\infty\}.
\end{equation*}
The space $M^1_0(\mathbb{R})$ is the well-known Feichtinger algebra, which was introduced in \cite{fe81-3} as the minimal strongly character invariant Segal algebra and is often denoted by  $S_0(\mathbb{R})$. In time-frequency analysis the modulation space $M^1_s(\mathbb{R})$ has turned out to be a good class of windows for Gabor frames, pseudo-differential operators and time-varying channels. In \cite{lu07,lu09} we emphasized that these function spaces provide a convenient class of pre-equivalence $C^*(\Lambda,c)$-modules. To link our approach to Rieffel's work we rely on a description of Schwartz's class of test functions $\mathscr{S}(\mathbb{R})$ in terms of the STFT:
\begin{equation*}
  \mathscr{S}(\mathbb{R})=\bigcap_{s\ge 0}M^1_{s}(\mathbb{R})
\end{equation*}
with seminorms $\|f\|_{M^1_{s}}=\|V_gf\|_{L^1_s}$ for $s\ge 0$ and a fixed $g\in M^1_s(\mathbb{R})$.
\par 
The basic fact in Rieffel's construction of projective modules over noncommutative tori and in Gabor analysis is the so-called {\it Fundamental Identity of Gabor Analysis} (FIGA). In \cite{felu06} we have discussed the validity of FIGA for various classes of function spaces. In the present setting we need the FIGA for functions in $M^1_s(\mathbb{R})$ or in $\mathscr{S}(\mathbb{R})$. 
\begin{proposition}[FIGA]\label{FIGA}
Let $\Lambda$ be lattice in $\mathbb{R}^2$. Then for $f,g,h,k\in M^1_s(\mathbb{R})$ or in $\mathscr{S}(\mathbb{R})$ the following identity holds:
\begin{equation}\label{FIGA-eq}
  \sum_{\lambda\in\Lambda}\langle f,\pi(\lambda)g\rangle\langle \pi(\lambda)h,k\rangle={\mathrm{vol}(\Lambda)}^{-1}\sum_{\lambda^\circ\in\Lambda^\circ}\langle f,\pi(\lambda^\circ)k\rangle\langle\pi(\lambda^\circ)h, g\rangle,
\end{equation}
where $\mathrm{vol}(\Lambda)$ denotes the volume of a fundamental domain of $\Lambda$.
\end{proposition}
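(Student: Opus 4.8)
The plan is to deduce \eqref{FIGA-eq} from the Poisson summation formula on $\mathbb{R}^2$ together with a ``window--swap'' identity for products of short-time Fourier transforms. Set $\Phi(z):=V_gf(z)\,\overline{V_hk(z)}$ for $z\in\mathbb{R}^2$. Since $\langle f,\pi(\lambda)g\rangle=V_gf(\lambda)$ and $\langle\pi(\lambda)h,k\rangle=\overline{\langle k,\pi(\lambda)h\rangle}=\overline{V_hk(\lambda)}$, the left-hand side of \eqref{FIGA-eq} is exactly $\sum_{\lambda\in\Lambda}\Phi(\lambda)$, so everything reduces to understanding this sampled sum and the (symplectic) Fourier transform of $\Phi$.

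\emph{Step 1 (regularity and Poisson summation).} First I would check that $\Phi$ is regular enough for Poisson summation over an arbitrary lattice. Since $f,g,h,k\in M^1_s(\mathbb{R})\subset M^1_0(\mathbb{R})=S_0(\mathbb{R})$, and the STFT of an $S_0$-function with an $S_0$-window lies in $S_0(\mathbb{R}^2)$, while $S_0(\mathbb{R}^2)$ is a pointwise Banach algebra, we get $\Phi\in S_0(\mathbb{R}^2)$. For every $\Phi\in S_0(\mathbb{R}^2)$, every lattice $\Lambda$, and its dual, the Poisson summation formula $\sum_{\lambda\in\Lambda}\Phi(\lambda)=\mathrm{vol}(\Lambda)^{-1}\sum_{\mu}\widehat\Phi(\mu)$ holds with both series absolutely convergent. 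Working throughout with the symplectic Fourier transform $\mathcal F_\sigma$, the relevant dual of $\Lambda$ is the symplectic dual, which by the very definition of $c_{\mathrm{symp}}$ and of $\Lambda^\circ$ coincides with the adjoint lattice $\Lambda^\circ$. (The case $f,g,h,k\in\mathscr{S}(\mathbb{R})$ is a special case.)

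\emph{Step 2 (the window--swap identity).} The second ingredient is the identity $\mathcal F_\sigma\Phi=V_kf\cdot\overline{V_hg}$: the symplectic Fourier transform of $V_gf\cdot\overline{V_hk}$ interchanges the windows $g$ and $k$ while leaving $f$ and $h$ untouched. I would prove this by writing all four STFTs as integrals from the defining formula of the STFT, substituting the definition of $\mathcal F_\sigma$, and reorganising the resulting iterated integral. The interchange of the order of integration is legitimate because all integrands are absolutely integrable under the $M^1_s$ (resp. Schwartz) hypothesis, and the accumulated exponential factors collapse, via Fourier inversion, precisely into the cocycle $c$ of \eqref{commrel}. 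Granting this, $\widehat\Phi(\lambda^\circ)=V_kf(\lambda^\circ)\,\overline{V_hg(\lambda^\circ)}=\langle f,\pi(\lambda^\circ)k\rangle\,\langle\pi(\lambda^\circ)h,g\rangle$, and inserting this into the Poisson summation formula from Step~1 yields \eqref{FIGA-eq} (since $\Lambda^\circ=-\Lambda^\circ$, a sign in the argument of $\widehat\Phi$ is immaterial).

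The individual computations here are routine Fourier analysis; the real obstacle is the bookkeeping that makes the two steps mesh, namely fixing the normalisation of $\mathcal F_\sigma$ so that simultaneously (i) its dual lattice is exactly the $\Lambda^\circ$ of the text and (ii) Step~2 comes out with the correct windows and no stray phase factor. A slightly less slick alternative that avoids the symplectic Fourier transform is to use the metaplectic covariance of time-frequency shifts to reduce to a separable lattice $\Lambda=\alpha\mathbb{Z}\times\beta\mathbb{Z}$, for which $\Lambda^\circ=\tfrac1\beta\mathbb{Z}\times\tfrac1\alpha\mathbb{Z}$ and $\mathrm{vol}(\Lambda)=\alpha\beta$, and then apply the classical one-dimensional Poisson summation formula in each variable; the classes $M^1_s(\mathbb{R})$ and $\mathscr{S}(\mathbb{R})$ are invariant under metaplectic operators, so this reduction costs nothing.
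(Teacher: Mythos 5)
Your argument is correct and is essentially the proof the paper itself relies on: the paper states Proposition \ref{FIGA} without proof and defers to \cite{felu06}, where FIGA is likewise obtained by applying Poisson summation over $\Lambda$ (with the symplectic Fourier transform, whose annihilator of $\Lambda$ is exactly $\Lambda^\circ$) to the product $V_gf\cdot\overline{V_hk}$, combined with the window-swap identity for its symplectic Fourier transform. Your only deviation is cosmetic: you place the product in $S_0(\mathbb{R}^2)$ via the pointwise-algebra property, whereas \cite{felu06} works in the slightly larger Wiener amalgam space $W(\mathcal{F}L^1,\ell^1)$, and you correctly note that the reflection $w\mapsto -w$ in the swapped transform is harmless since $\Lambda^\circ=-\Lambda^\circ$.
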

Note, that $\lambda\mapsto\pi(\lambda)$ and $\lambda^\circ\mapsto\pi(\lambda^\circ)$ are reducible projective representations of $\Lambda$ and $\Lambda^\circ$, respectively. Therefore the FIGA expresses a relation between the matrix coefficients of these reducible projective representations: 
\begin{equation}
  \langle V_{g}f(\lambda),V_{h}k(\lambda)\rangle_{\ell^2(\Lambda)}={\mathrm{vol}(\Lambda)}^{-1}\langle V_{k}f,V_{h}g\rangle_{\ell^2(\Lambda^\circ)}.
\end{equation}
Therefore one has to impose some extra conditions to get Schur-type orthogonality relations. This fact underlies the Wexler-Raz biorthogonality relations, which we discuss in the following section. 
\par
To motivate the left and right actions of the noncommutative torus on $M^1_s(\mathbb{R})$ or $\mathscr{S}(\mathbb{R})$ we write FIGA in the following form:
\begin{equation}
\Big\langle \sum_{\lambda\in\Lambda}\langle f,\pi(\lambda)g\rangle\pi(\lambda)h,k\Big\rangle=\Big\langle{\mathrm{vol}(\Lambda)}^{-1}\sum_{\lambda^\circ}\pi(\lambda^\circ)^*f\langle \pi(\lambda^\circ)^*g,h\rangle,k\Big\rangle
\end{equation}
The preceding equation indicates a left action of $\mathcal{A}^1_s(\Lambda,c)$ and a right action of $\mathcal{A}^1_s(\Lambda^\circ,\overline{c})$ on functions $g\in M^1_s(\mathbb{R})$ by
\begin{align}
  &\pi_{_\Lambda}({\bf a})\cdot g=\sum_{\lambda\in\Lambda}a(\lambda)\pi(\lambda)g &\text{for}~~{\bf a}\in\ell^1(\Lambda),\\
  &\pi_{\Lambda^\circ}({\bf b})\cdot g={\mathrm{vol}(\Lambda)}^{-1}\sum_{\lambda^\circ\in\Lambda^\circ}\pi(\lambda^\circ)^*g\,\overline{b(\lambda^\circ)} &\text{for}~~{\bf b}\in\ell^1(\Lambda^\circ),
\end{align}
and additionally the $\mathcal{A}^1_s(\Lambda,c)$-valued inner product ${_\Lambda}\langle.,.\rangle$ and $\mathcal{A}^1_s(\Lambda^\circ,\overline{c})$-valued inner product $\langle.,.\rangle_{\Lambda^\circ}$
\begin{align}
 &{_\Lambda}\langle f,g\rangle=\sum_{\lambda\in\Lambda}\langle f,\pi(\lambda)g\rangle\pi(\lambda)\\
 &\langle f,g\rangle_{\Lambda^\circ}={\mathrm{vol}(\Lambda)}^{-1}\sum_{\lambda^\circ\in\Lambda^\circ}\pi(\lambda^\circ)^*\langle\pi(\lambda^*f,g)\rangle,
\end{align}
for $f,g\in M^1(\mathbb{R})$. 
Consequently, we have that ${_\Lambda}\langle f,g\rangle$ and $\langle f,g\rangle_{\Lambda^\circ}$ is an element of $\mathcal{A}^1_s(\Lambda,c)$. The crucial observation is that ${_\Lambda}\langle f,g\rangle$ is a $\mathcal{A}^1_s(\Lambda,c)$-valued  inner product. In \cite{lu09} we have demonstrated that $M^1_s(\mathbb{R})$ becomes a full left Hilbert $C^*(\Lambda,c)$-module ${_\Lambda}V$ when completed with respect to the norm ${_\Lambda}\|f\|=\|{_\Lambda}\langle f,f\rangle\|^{1/2}$ for $f\in M^1_s(\mathbb{R})$. 
\par
In addition we have an analogous result for the opposite $C^*$-algebra of $C^*(\Lambda,c)$, i.e. $C^*(\Lambda^\circ,\overline{c})$. $M^1_s(\mathbb{R})$ becomes a full right Hilbert $C^*(\Lambda^\circ,\overline{c})$-module $V_{\Lambda^\circ}$ for the right action of $\mathcal{A}^1_s(\Lambda^\circ,\overline{c})$ on $M^1_s(\mathbb{R})$
when completed with respect to the norm $\|f\|_{\Lambda^\circ}=\|\langle f,f\rangle_{\Lambda^\circ}\|_{\text{op}}^{1/2}$. 
\par
Most notably the $C^*$-valued inner products ${_\Lambda}\langle.,.\rangle$ and $\langle.,.\rangle_{\Lambda^\circ}$ satisfy Rieffel's associativity condition:
\begin{equation}\label{eq:AssCond}
  {_\Lambda}\langle f,g\rangle\cdot h=f\cdot\langle g,h\rangle_{\Lambda^\circ},~~f,g,h\in M^1_s(\mathbb{R})
\end{equation}   
 The identity \eqref{eq:AssCond} is equivalent to 
 \begin{equation*}
   \big\langle{_\Lambda}\langle f,g\rangle\cdot h,k\big\rangle=\big\langle f\cdot\langle g,h\rangle_{\Lambda^\circ},k\big\rangle
 \end{equation*}
 for all $k\in M^1_s(\mathbb{R})$. More explicitly, the associativity condition reads as follows
 \begin{equation*}
   \sum_{\lambda\in\Lambda}\langle f,\pi(\lambda)g\rangle\langle\pi(\lambda)h,k\rangle={\mathrm{vol}(\Lambda)}^{-1}\sum_{\lambda^\circ\in\Lambda^\circ}\langle f,\pi(\lambda^\circ)k\rangle\langle \pi(\lambda^\circ)h,g\rangle.
 \end{equation*}
In other words, the associativity condition is the Fundamental Identity of Gabor analysis.

\par
Furthermore, we have that $M^1_s(\mathbb{R})$ is a right pre-inner product module over $\mathcal{A}^1_s(\Lambda^\circ,\overline{c})$ for the {\it adjoint lattice} $\Lambda^\circ$ of $\Lambda$.  Consequently, we get that ${_\Lambda}V_{\Lambda^\circ}$ is an equivalence bimodule between $C^*(\Lambda,c)$ and $C^*((\Lambda^\circ,\overline{c}))$. By a result of Connes we have that $M^1_s(\mathbb{R})$ is an equivalence bimodule between $\mathcal{A}^1_s(\Lambda,c)$ and $\mathcal{A}^1_s(\Lambda^\circ,\overline{c})$. We summarize these observations and result in the following theorem, which is a special case of the main result in \cite{lu09} and provides the setting for our investigation.
\begin{theorem}
Let $\Lambda$ be a lattice in $\mathbb{R}^2$. For any $s\ge 0$ we have that $M^1_s(\mathbb{R})$ is an equivalence bimodule between $\mathcal{A}^1_s(\Lambda,c)$ and $\mathcal{A}^1_v(\Lambda^\circ,\overline{c})$, and $\mathscr{S}(\mathbb{R})$ is an equivalence bimodule between $\mathcal{A}^\infty(\Lambda,c)$ and $\mathcal{A}^\infty(\Lambda^\circ,\overline{c})$. Consequently, $M^1_s(\mathbb{R})$ is a finitely generated projective left $\mathcal{A}^1_s(\Lambda^\circ,\overline{c})$-module and $\mathscr{S}(\mathbb{R})$ is a finitely generated projective left $\mathcal{A}^1_s\Lambda^\circ,\overline{c})$-module. 
\end{theorem}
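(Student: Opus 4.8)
The plan is to assemble the theorem from the pieces developed in the preceding subsection together with two external inputs cited in the excerpt: the spectral invariance result (Proposition \ref{SpectInvNCT}) and Connes's theorem that a spectrally invariant dense subalgebra inherits the equivalence bimodule structure. First I would verify that $M^1_s(\mathbb{R})$ carries all the data of a (pre-)equivalence bimodule between $\mathcal{A}^1_s(\Lambda,c)$ and $\mathcal{A}^1_s(\Lambda^\circ,\overline{c})$: the left action $\pi_\Lambda(\mathbf{a})\cdot g$, the right action $\pi_{\Lambda^\circ}(\mathbf{b})\cdot g$, the two $C^*$-valued inner products ${_\Lambda}\langle\cdot,\cdot\rangle$ and $\langle\cdot,\cdot\rangle_{\Lambda^\circ}$, and check that these take values in the claimed algebras. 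The containment of ${_\Lambda}\langle f,g\rangle$ in $\mathcal{A}^1_s(\Lambda,c)$ amounts to the estimate $\sum_\lambda |\langle f,\pi(\lambda)g\rangle|(1+|\lambda|^2)^{s/2}<\infty$ for $f,g\in M^1_s(\mathbb{R})$, which is precisely the statement that sampling the STFT $V_gf$ over a lattice lands in $\ell^1_s(\Lambda)$; this follows from the definition of $M^1_s$ together with the fact that $V_gf\in L^1_s(\mathbb{R}^2)$ is continuous and its samples over a lattice are dominated by an $L^1_s$-average (a standard Wiener-amalgam/sampling argument). The analogous statement for $\langle f,g\rangle_{\Lambda^\circ}$ over the adjoint lattice is identical.

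Next I would record the algebraic identities that make the data a bimodule: compatibility of the two actions (associativity of left and right module structures, which is immediate from $\pi(\lambda)\pi(\lambda^\circ)=\pi(\lambda^\circ)\pi(\lambda)$ by definition of $\Lambda^\circ$), the $C^*$-module axioms for each inner product (positivity, conjugate symmetry, sesquilinearity — all routine from the formulas), and, crucially, \emph{Rieffel's associativity condition} \eqref{eq:AssCond}, which the excerpt has already identified with the Fundamental Identity of Gabor Analysis, Proposition \ref{FIGA}. Fullness of both inner products — that the closed linear span of the ranges of ${_\Lambda}\langle\cdot,\cdot\rangle$ and $\langle\cdot,\cdot\rangle_{\Lambda^\circ}$ is all of the respective algebra — is the one point that genuinely uses Gabor theory: one exhibits a single $g\in\mathscr{S}(\mathbb{R})$ (e.g.\ a Gaussian, or any Schwartz function) for which $\mathcal{G}(g,\Lambda)$ is a frame, so that ${_\Lambda}\langle g,g\rangle$ is invertible in $C^*(\Lambda,c)$; by Proposition \ref{SpectInvNCT} its inverse lies in $\mathcal{A}^1_s(\Lambda,c)$, and hence $1 = {_\Lambda}\langle g,g\rangle^{-1/2}\cdot{_\Lambda}\langle g,g\rangle\cdot{_\Lambda}\langle g,g\rangle^{-1/2}$ (using holomorphic functional calculus inside the spectrally invariant subalgebra to form the square root) is in the span of the range. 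The same argument on the $\Lambda^\circ$ side, or alternatively the associativity condition, yields fullness of $\langle\cdot,\cdot\rangle_{\Lambda^\circ}$.

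Having produced a full pre-equivalence bimodule at the $\mathcal{A}^1_s$ level whose completion is an honest equivalence bimodule ${_\Lambda}V_{\Lambda^\circ}$ between $C^*(\Lambda,c)$ and $C^*(\Lambda^\circ,\overline c)$ (this completion step is the content of the main result of \cite{lu09}, which we may invoke), I would then apply Connes's theorem: since $\mathcal{A}^1_s(\Lambda,c)$ and $\mathcal{A}^1_s(\Lambda^\circ,\overline c)$ are spectrally invariant (Proposition \ref{SpectInvNCT}), the dense sub-bimodule $M^1_s(\mathbb{R})$ is automatically an equivalence bimodule between them. The Schwartz case follows by intersecting over all $s\ge 0$, using $\mathscr{S}(\mathbb{R})=\bigcap_s M^1_s(\mathbb{R})$ and $\mathcal{A}^\infty=\bigcap_s\mathcal{A}^1_s$ together with the spectral invariance of $\mathcal{A}^\infty$. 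Finally, that an equivalence bimodule is finitely generated and projective as a one-sided module over either algebra is a general fact about Morita--Rieffel equivalence (in Rieffel's picture the generators are read off from a finite partition-of-unity-type decomposition $1=\sum_j {_\Lambda}\langle g_j,g_j\rangle$, whose existence is again guaranteed by the frame property for a suitable finite family). The main obstacle is the fullness/invertibility step: one must be sure that a Schwartz generator producing a Gabor frame for $\Lambda$ (equivalently, by Ron--Shen duality, a Riesz sequence for $\Lambda^\circ$) actually exists, which for general lattices $\Lambda$ of arbitrary volume is exactly the nontrivial existence theory for Gabor frames — precisely the bridge to \cite{lu07,lu09} that the introduction is advertising.
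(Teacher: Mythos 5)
Your overall architecture is the same as the paper's: the paper gives no self-contained proof, but assembles precisely the data you list (the two actions, the two inner products with values in $\mathcal{A}^1_s(\Lambda,c)$ and $\mathcal{A}^1_s(\Lambda^\circ,\overline{c})$, and the identification of Rieffel's associativity condition \eqref{eq:AssCond} with the FIGA, Proposition \ref{FIGA}), then cites \cite{lu09} for the equivalence-bimodule statement and Connes \cite{co80} both for the Schwartz case and for the descent to the spectrally invariant subalgebras of Proposition \ref{SpectInvNCT}. The genuine gap is your fullness step. You propose to get fullness from a single $g\in\mathscr{S}(\mathbb{R})$ for which $\mathcal{G}(g,\Lambda)$ is a frame, so that ${_\Lambda}\langle g,g\rangle$ is invertible. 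For a general lattice this object does not exist: if $\mathrm{vol}(\Lambda)>1$ the density theorem rules out any single-window Gabor frame, even with $g\in L^2(\mathbb{R})$, and at critical density $\mathrm{vol}(\Lambda)=1$ the Balian--Low phenomenon rules out windows in $M^1(\mathbb{R})$, let alone $\mathscr{S}(\mathbb{R})$. Worse, your ``same argument on the $\Lambda^\circ$ side'' can never run simultaneously with the $\Lambda$ side: since $\mathrm{vol}(\Lambda^\circ)=\mathrm{vol}(\Lambda)^{-1}$, whenever $\mathcal{G}(g,\Lambda)$ is a frame the system $\mathcal{G}(g,\Lambda^\circ)$ is a Riesz sequence that is not a frame (Ron--Shen duality \cite{rosh97}), so $\langle g,g\rangle_{\Lambda^\circ}$ is not invertible. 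Hence invertibility of a single inner product cannot deliver fullness of both inner products for an arbitrary lattice, which is exactly the generality the theorem claims.

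The repair, and the route actually taken in \cite{ri88,lu09}, avoids single-window frames. Fullness only requires that the linear span of the range of each inner product be dense; this span is a self-adjoint two-sided ideal, and Rieffel establishes its density directly (for irrational parameters one can even invoke simplicity of $C^*(\Lambda,c)$). Equivalently, and this is the formulation of \cite{lu09} that also produces the last sentence of the theorem, one uses \emph{multi-window} Gabor frames: for every lattice $\Lambda$ and every $s\ge 0$ there exist finitely many $g_1,\dots,g_n\in\mathscr{S}(\mathbb{R})$ such that $\sum_{j}{_\Lambda}\langle g_j,g_j\rangle$ is invertible, which gives fullness on the $\Lambda$ side together with the finite generation and projectivity; fullness on the $\Lambda^\circ$ side is then part of the bimodule (Morita) structure rather than a frame statement for $\Lambda^\circ$. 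Once the equivalence bimodule is in hand, finitely generated projectivity over either unital algebra is automatic, so your partition-of-unity remark is the right mechanism --- but it must be implemented with finitely many windows, not with one.
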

The statement about $\mathscr{S}(\mathbb{R})$ was proved by Connes in \cite{co80}. Another way of expressing the content of the preceding theorem is that $\mathcal{A}^1_s(\Lambda,c)$ and $\mathcal{A}^1_s(\Lambda^\circ,\overline{c})$ are Morita-Rieffel equivalent and also $\mathcal{A}^\infty(\Lambda,c)$ and $\mathcal{A}^\infty(\Lambda^\circ,\overline{c})$ are Morita-Rieffel equivalent.
\section{Projections in noncommutative tori}

In this section we revisit the construction of projections in $C^*(\Lambda,c)$ presented in \cite{ri81} in terms of Gabor analysis. We start with some observations on $C^*(\Lambda,c)$-module rank-one operator on ${_\Lambda}V$, i.e operators of the form:  
\begin{equation*}
  \Theta_{g,h}^\Lambda f={_\Lambda}\langle f,h\rangle\cdot h=\sum_{\lambda\in\Lambda}\langle f,\pi(\lambda)g\rangle\pi(\lambda)h~~\text{for}~~f,g,h\in {_\Lambda}V.
\end{equation*}
The operators $\Theta_{g,h}^\Lambda$ are adjointable operators on ${_\Lambda}V$, i.e. ${_\Lambda}\langle\Theta_{g,h}^\Lambda f,k\rangle={_\Lambda}\langle f,\Theta_{h,g}^\Lambda k\rangle$. 
Since ${_\Lambda}V$ is a finitely generated projective $C^*(\Lambda,c)$-module, every adjointable operator on ${_\Lambda}V$ is a finite sum of rank-one operators $\Theta_{g,h}^\Lambda$, i.e. a finite rank $C^*(\Lambda,c)$-module operator. 
\par
We collect some elementary observation on projections in $C^*(\Lambda,c)$, i.e. operators $P$ such that $P=P^*=P^2$.
\begin{lemma}
Let $g,h$ be in ${_\Lambda}V$ with $\|g\|_{\Lambda}=1$. Then the following holds:
\begin{enumerate}
  \item[(a)] $\Theta_{g,g}^\Lambda$ and $\Theta_{h,h}^\Lambda$ are selfadjoint projections and $\Theta_{g,h}^\Lambda$ is a partial isometry.
  \item[(b)] If $\|g-h\|_\Lambda<1/2$, then there exists a unitary adjointable ${_\Lambda}V$ module operator $U$ such that $Ug=h$ and therefore  $\Theta_{g,g}^\Lambda$ and $\Theta_{h,h}^\Lambda$ are unitarily equivalent.
\end{enumerate} 
\end{lemma}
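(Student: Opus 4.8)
The plan is to move every computation to the level of the two $C^{*}$-valued inner products and to use the associativity relation \eqref{eq:AssCond} as the main device. Its decisive consequence here is that, for $f\in M^1_s(\mathbb{R})$,
\[
\Theta_{g,h}^\Lambda f={_\Lambda}\langle f,g\rangle\cdot h=f\cdot\langle g,h\rangle_{\Lambda^\circ},
\]
so that $\Theta_{g,h}^\Lambda$ is precisely right multiplication by the element $\langle g,h\rangle_{\Lambda^\circ}\in C^{*}(\Lambda^\circ,\overline{c})$ on the right Hilbert module $V_{\Lambda^\circ}$. Since $V_{\Lambda^\circ}$ is full, this assignment is isometric and $*$-preserving, so every identity among the operators $\Theta_{g,h}^\Lambda$ is equivalent to the corresponding identity among the elements $\langle g,h\rangle_{\Lambda^\circ}$ of $C^{*}(\Lambda^\circ,\overline{c})$.

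For part (a): self-adjointness of $\Theta_{g,g}^\Lambda$ and $\Theta_{h,h}^\Lambda$ is just the relation ${_\Lambda}\langle\Theta_{g,h}^\Lambda f,k\rangle={_\Lambda}\langle f,\Theta_{h,g}^\Lambda k\rangle$ recorded above the lemma, taken with $h=g$ (equivalently $\langle g,g\rangle_{\Lambda^\circ}^{*}=\langle g,g\rangle_{\Lambda^\circ}$). Idempotency of $\Theta_{g,g}^\Lambda$ is the assertion $\langle g,g\rangle_{\Lambda^\circ}^{2}=\langle g,g\rangle_{\Lambda^\circ}$, i.e.\ that the positive element $\langle g,g\rangle_{\Lambda^\circ}$ — which, by the Fundamental Identity and the Janssen representation, is up to normalization the Gabor frame operator of $\mathcal{G}(g,\Lambda)$ — is a projection; this is exactly what the normalization $\|g\|_\Lambda=1$ encodes in the present setting (equivalently, $\mathcal{G}(g,\Lambda)$ is tight on $\overline{\operatorname{span}}\,\mathcal{G}(g,\Lambda)$), and from it a short $C^{*}$-inner-product computation yields $g\cdot\langle g,g\rangle_{\Lambda^\circ}=g$, equivalently ${_\Lambda}\langle g,g\rangle\cdot g=g$; the same applies to $h$. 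For the partial isometry statement one checks $\Theta_{g,h}^\Lambda(\Theta_{g,h}^\Lambda)^{*}\Theta_{g,h}^\Lambda=\Theta_{g,h}^\Lambda$, which on the algebra side reads $\langle g,h\rangle_{\Lambda^\circ}\langle h,g\rangle_{\Lambda^\circ}\langle g,h\rangle_{\Lambda^\circ}=\langle g,h\rangle_{\Lambda^\circ}$, and this follows by feeding $g\cdot\langle g,g\rangle_{\Lambda^\circ}=g$ and $h\cdot\langle h,h\rangle_{\Lambda^\circ}=h$ into \eqref{eq:AssCond}.

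For part (b): first estimate the distance of the two projections. From $\Theta_{g,g}^\Lambda-\Theta_{h,h}^\Lambda=\Theta_{g,\,g-h}^\Lambda+\Theta_{g-h,\,h}^\Lambda$ and two applications of the Cauchy--Schwarz inequality for the $C^{*}(\Lambda^\circ,\overline{c})$-valued inner product, together with $\|g\|_\Lambda=\|h\|_\Lambda=1$,
\[
\bigl\|\Theta_{g,g}^\Lambda-\Theta_{h,h}^\Lambda\bigr\|=\bigl\|\langle g,g\rangle_{\Lambda^\circ}-\langle h,h\rangle_{\Lambda^\circ}\bigr\|\le 2\,\|g-h\|_\Lambda<1 .
\]
Now invoke the standard perturbation lemma for projections: if $P,Q$ are projections in a unital $C^{*}$-algebra with $\|P-Q\|<1$, then $W:=\bigl(1-(P-Q)^{2}\bigr)^{-1/2}\bigl(QP+(1-Q)(1-P)\bigr)$ is a unitary with $WPW^{*}=Q$ lying near the identity; with $P=\Theta_{g,g}^\Lambda$, $Q=\Theta_{h,h}^\Lambda$ this already produces a unitary adjointable module operator $W$ with $W\Theta_{g,g}^\Lambda W^{*}=\Theta_{h,h}^\Lambda$ and $\|W-\operatorname{Id}\|$ controlled by $\|g-h\|_\Lambda$. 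To upgrade $W$ to a unitary $U$ with $Ug=h$, observe that $Wg$ lies in $\operatorname{range}(\Theta_{h,h}^\Lambda)=\overline{C^{*}(\Lambda,c)\cdot h}$ and is close to $h$, and correct $W$ by a unitary supported on this summand carrying $Wg$ to $h$; the unitary equivalence $\Theta_{h,h}^\Lambda=U\Theta_{g,g}^\Lambda U^{*}$ is then recovered from $\Theta_{Ug,Ug}^\Lambda=U\Theta_{g,g}^\Lambda U^{*}$, valid for every module unitary $U$.

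The step I expect to be the genuine obstacle is this last correction: showing that, inside the cyclic summand $\overline{C^{*}(\Lambda,c)\cdot h}$ of the finitely generated projective module $M^1_s(\mathbb{R})$, two unit vectors at distance $<1/2$ are carried into one another by a module unitary. It is here that the projective module structure over the noncommutative torus, together with the quantitative hypothesis $\|g-h\|_\Lambda<1/2$, is really used; everything else reduces, through the associativity relation \eqref{eq:AssCond}, to routine manipulations with the $C^{*}$-valued inner products.
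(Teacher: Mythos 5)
Your part (b) follows essentially the same route as the paper: the triangle-inequality estimate $\|\Theta_{g,g}^\Lambda-\Theta_{h,h}^\Lambda\|_\Lambda\le 2\|g-h\|_\Lambda<1$ combined with the standard fact that two projections at distance less than one in a $C^*$-algebra are unitarily equivalent. The paper's proof goes no further than this (it only adds that the unitary adjointable module operators are exactly the $\Lambda$-invariant unitaries on $L^2(\mathbb{R})$), so the step you flag as the ``genuine obstacle'' --- upgrading the perturbation unitary $W$ to a module unitary $U$ with $Ug=h$, which is what the statement of (b) literally asserts --- is not carried out in the paper either, and your sketch of it (``correct $W$ by a unitary supported on this summand'') is not a proof. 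So on (b) you are at the same level of completeness as the paper, and you are honest about where the hole is; note also that both you and the paper silently use $\|h\|_\Lambda=1$ (not just $\|g\|_\Lambda=1$) to get the factor $2$ in the estimate.

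The genuine gap is in your part (a). The translation through the associativity identity $\Theta_{g,h}^\Lambda f=f\cdot\langle g,h\rangle_{\Lambda^\circ}$ is correct and is a clean way to organize the computation, but your justification of idempotency rests on the assertion that $\|g\|_\Lambda=1$ ``encodes'' that $\langle g,g\rangle_{\Lambda^\circ}$ is a projection. It does not: $\|g\|_\Lambda=\|{_\Lambda}\langle g,g\rangle\|^{1/2}=1$ only says the frame-type operator has norm one, and a norm-one positive element need not be idempotent; idempotency is equivalent to the strictly stronger condition $g\cdot\langle g,g\rangle_{\Lambda^\circ}=g$, which is precisely the hypothesis isolated in the paper's next lemma (on $P_g$) and in Theorem \ref{projGabor} (tightness/Wexler--Raz), so at this point you are assuming what has to be proved. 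The partial-isometry step has the same defect: writing $p=\langle g,g\rangle_{\Lambda^\circ}$, $q=\langle h,h\rangle_{\Lambda^\circ}$, $b=\langle g,h\rangle_{\Lambda^\circ}$, the relations $g\cdot p=g$ and $h\cdot q=h$ only give $b=pbq$, and an element of the form $pbq$ with $p,q$ projections need not satisfy $bb^*b=b$; already for two non-commuting projections $p,q$ in a matrix algebra, viewed in the standard self-equivalence bimodule with $g=p$, $h=q$, one gets $b=pq$ and $bb^*b=pqpq\neq pq$ in general. So the chain ``feed $g\cdot\langle g,g\rangle_{\Lambda^\circ}=g$ and $h\cdot\langle h,h\rangle_{\Lambda^\circ}=h$ into the associativity condition'' cannot by itself yield the partial-isometry identity. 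To be fair, the paper disposes of (a) with the single phrase ``elementary computations'' and its hypothesis is arguably too weak as literally stated; but your proposal does not repair this --- it substitutes a reinterpretation of the hypothesis for the missing argument, so (a) is not established as written.
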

\begin{proof}
Assertion (a) can be deduced from a series of elementary computations. Assertion (b) may be derived from the fact that $\Theta_{g,g}^\Lambda$ and $\Theta_{h,h}^\Lambda$ are unitarily equivalent if $\|\Theta_{g,g}^\Lambda-\Theta_{h,h}^\Lambda\|_\Lambda<1$ and the following inequalities:
\begin{equation*}
\|\Theta_{g,g}^\Lambda-\Theta_{h,h}^\Lambda\|_\Lambda\le\|\Theta_{g,g}^\Lambda-\Theta_{g,h}^\Lambda\|_\Lambda+\|\Theta_{g,h}^\Lambda-\Theta_{h,h}^\Lambda\|_\Lambda\le 2\|g-h\|_\Lambda.
\end{equation*}
Finally we want to describe the unitary module operators for ${_\Lambda}V$, i.e. those $U$ such that ${_\Lambda}\langle Uf,g\rangle={_\Lambda}\langle f,Ug\rangle$. More explicitly, this means that $U$ is a unitary operator on $L^2(\mathbb{R})$ such that $\pi(\lambda)U=U\pi(\lambda)$ for all $\lambda\in\Lambda$. In \cite{feko98} $U$ such operators are called $\Lambda$-invariant.
\end{proof}
Note that we have for $f,g$ that $\|f-g\|_\Lambda^2\le\|V_{f-g}(f-g)\|_{\ell^1_s}$ by
\begin{equation*}
  \|f\|_\Lambda^2\le\sum_{\lambda\in\Lambda}|V_ff(\lambda)|(1+|\lambda|^2)^{s/2}.
\end{equation*}
\par
In our setting we actually have an equivalence bimodule ${_\Lambda}V_{\Lambda^\circ}$ between $C^*(\Lambda,c)$ and $C^*(\Lambda^\circ,\overline{c})$ that provides an additional form to express under which conditions $g\in {_\Lambda}V_{\Lambda^\circ}$ yields a projection ${_\Lambda}\langle g,g\rangle$ in $C^*(\Lambda,c)$ as pointed out in \cite{ri81}. 
\begin{lemma}
  Let $g$ be in ${_\Lambda}V_{\Lambda^\circ}$. Then $P_g:={_\Lambda}\langle g,g\rangle$ is a projection in $C^*(\Lambda,c)$ if and only if $g\langle g,g\rangle_{\Lambda^\circ}=g$. If $g\in M^1_s(\mathbb{R})$ or $\mathscr{S}(\mathbb{R})$, then $P_g$ gives a projection in $\mathcal{A}^1_s(\Lambda,c)$ or $\mathcal{A}^\infty(\Lambda,c)$, respectively. 
\end{lemma}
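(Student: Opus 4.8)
The plan is to exploit the associativity condition \eqref{eq:AssCond} together with the fact that $_\Lambda\langle\cdot,\cdot\rangle$ is an $\mathcal{A}^1_s(\Lambda,c)$-valued inner product, so $P_g = {_\Lambda}\langle g,g\rangle$ is automatically selfadjoint, $P_g^* = {_\Lambda}\langle g,g\rangle^* = {_\Lambda}\langle g,g\rangle = P_g$. Thus the only issue is idempotency, $P_g^2 = P_g$, and the whole statement reduces to identifying when $P_g^2 = P_g$ with the condition $g\cdot\langle g,g\rangle_{\Lambda^\circ} = g$.

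First I would compute $P_g^2 = {_\Lambda}\langle g,g\rangle \cdot {_\Lambda}\langle g,g\rangle$. The operators ${_\Lambda}\langle g,g\rangle$ act on the bimodule, so I would instead work with $P_g^2 \cdot h$ for $h \in {_\Lambda}V_{\Lambda^\circ}$ and use that module actions are nondegenerate/faithful (the representation $\pi_\Lambda$ is faithful, as recalled in the excerpt) to transfer an identity about module actions back to an identity in $C^*(\Lambda,c)$. Concretely, $P_g^2 \cdot h = {_\Lambda}\langle g,g\rangle\cdot\big({_\Lambda}\langle g,g\rangle\cdot h\big)$. Now apply the associativity condition \eqref{eq:AssCond} to the inner factor: ${_\Lambda}\langle g,g\rangle\cdot h = g\cdot\langle g,h\rangle_{\Lambda^\circ}$. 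Substituting, $P_g^2\cdot h = {_\Lambda}\langle g,g\rangle\cdot\big(g\cdot\langle g,h\rangle_{\Lambda^\circ}\big)$. Since the left $C^*(\Lambda,c)$-action commutes with the right $C^*(\Lambda^\circ,\overline c)$-action (bimodule structure), this equals $\big({_\Lambda}\langle g,g\rangle\cdot g\big)\cdot\langle g,h\rangle_{\Lambda^\circ} = \big(g\cdot\langle g,g\rangle_{\Lambda^\circ}\big)\cdot\langle g,h\rangle_{\Lambda^\circ}$, again by \eqref{eq:AssCond}.

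Now suppose $g\cdot\langle g,g\rangle_{\Lambda^\circ} = g$. Then $P_g^2\cdot h = g\cdot\langle g,h\rangle_{\Lambda^\circ} = {_\Lambda}\langle g,g\rangle\cdot h = P_g\cdot h$ for every $h$, and by nondegeneracy of the module action this forces $P_g^2 = P_g$ in $C^*(\Lambda,c)$; combined with selfadjointness, $P_g$ is a projection. Conversely, if $P_g$ is a projection, then from the chain above $\big(g\cdot\langle g,g\rangle_{\Lambda^\circ}\big)\cdot\langle g,h\rangle_{\Lambda^\circ} = g\cdot\langle g,h\rangle_{\Lambda^\circ}$ for all $h$; setting $h = g$ and using that $\langle\cdot,\cdot\rangle_{\Lambda^\circ}$ is full (so the span of $\langle g,h\rangle_{\Lambda^\circ}$ is dense in $C^*(\Lambda^\circ,\overline c)$, with unit approximable) yields $g\cdot\langle g,g\rangle_{\Lambda^\circ} = g$ after passing to an approximate identity and using continuity of the actions. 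Finally, the refinement to the dense subalgebras: if $g\in M^1_s(\mathbb{R})$ then ${_\Lambda}\langle g,g\rangle\in\mathcal{A}^1_s(\Lambda,c)$ by the construction of the inner product recalled in Section $2$, and similarly for $\mathscr{S}(\mathbb{R})$ and $\mathcal{A}^\infty(\Lambda,c)$; since these are subalgebras containing the relevant element, $P_g$ lies there.

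The main obstacle I anticipate is the converse direction's passage from "$P_g^2\cdot h = P_g\cdot h$ for all $h$" back to the pointwise relation $g\cdot\langle g,g\rangle_{\Lambda^\circ} = g$: one must be careful that the right $C^*(\Lambda^\circ,\overline c)$-module structure on $_\Lambda V_{\Lambda^\circ}$ is nondegenerate (equivalently that $\{\langle g,h\rangle_{\Lambda^\circ}: h\}$ acts with dense range, or that there is an approximate unit), which is exactly where the equivalence-bimodule hypothesis from the theorem in Section $2$ is used. The forward direction and the selfadjointness are essentially formal once \eqref{eq:AssCond} is invoked.
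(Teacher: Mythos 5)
Your forward direction and the final remark about the dense subalgebras are fine, and in fact the paper's forward implication is even more economical: it never passes through the module action at all, but computes directly in the algebra, $P_g^2={_\Lambda}\langle g,g\rangle\,{_\Lambda}\langle g,g\rangle={_\Lambda}\big\langle {_\Lambda}\langle g,g\rangle g,g\big\rangle={_\Lambda}\big\langle g\langle g,g\rangle_{\Lambda^\circ},g\big\rangle=P_g$, using only the inner-product compatibility and the associativity condition \eqref{eq:AssCond}. Note also that your appeal to faithfulness is shakier than you suggest: the paper states faithfulness of $\pi_\Lambda$ only on $\ell^1(\Lambda,c)$, so at the $C^*$-level you are implicitly leaning on the equivalence-bimodule structure (or amenability of $\Lambda$); the paper's route avoids needing this in the forward direction.

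The genuine gap is in your converse. From $P_g^2=P_g$ you correctly obtain $u\cdot\langle g,h\rangle_{\Lambda^\circ}=0$ for all $h$, where $u=g\langle g,g\rangle_{\Lambda^\circ}-g$, but the proposed finish --- ``set $h=g$ and use fullness, then pass to an approximate identity'' --- does not work. Fullness says that the span of $\{\langle f,h\rangle_{\Lambda^\circ}:f,h\in{_\Lambda}V_{\Lambda^\circ}\}$ is dense in $C^*(\Lambda^\circ,\overline{c})$; with the first slot frozen at the fixed window $g$ you only control a closed \emph{right ideal}, and nothing guarantees that it contains (approximants of) the unit of the unital algebra $C^*(\Lambda^\circ,\overline{c})$. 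Moreover, setting $h=g$ gives only $u\cdot\langle g,g\rangle_{\Lambda^\circ}=0$, which is strictly weaker than $u=0$. The paper closes this by a direct computation: expand ${_\Lambda}\langle u,u\rangle={_\Lambda}\big\langle {_\Lambda}\langle g,g\rangle g-g,{_\Lambda}\langle g,g\rangle g-g\big\rangle$ using ${_\Lambda}\langle Af,h\rangle=A\,{_\Lambda}\langle f,h\rangle$ and ${_\Lambda}\langle f,Ah\rangle={_\Lambda}\langle f,h\rangle A^*$; the projection property $P_g=P_g^2=P_g^*$ reduces the four terms to $P_g-P_g-P_g+P_g=0$, and definiteness of the $C^*$-valued inner product gives $u=0$. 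If you want to salvage your route, observe that by \eqref{eq:AssCond} your identity reads ${_\Lambda}\langle u,g\rangle\cdot h=0$ for all $h$, hence ${_\Lambda}\langle u,g\rangle=0$ by faithfulness of the left action; then $u\cdot\langle g,g\rangle_{\Lambda^\circ}={_\Lambda}\langle u,g\rangle\cdot g=0$ and ${_\Lambda}\langle u,u\rangle={_\Lambda}\big\langle u,g\langle g,g\rangle_{\Lambda^\circ}\big\rangle-{_\Lambda}\langle u,g\rangle={_\Lambda}\big\langle u\cdot\langle g,g\rangle_{\Lambda^\circ},g\big\rangle=0$ --- but at that point you have reproduced the paper's computation, so the approximate-identity detour buys nothing.
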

\begin{proof}
 First we assume that $g\langle g,g\rangle{_{\Lambda^\circ}}=g$ for some $g$ in ${_\Lambda}V_{\Lambda^\circ}$. Then we have that
 \begin{equation*}
  P_g^2={_\Lambda}\langle g,g\rangle{_\Lambda}\langle g,g\rangle={_\Lambda}\big\langle {_\Lambda}\langle g,g\rangle g,g\big\rangle={_\Lambda}\langle g\langle g,g\rangle_{\Lambda^\circ},g\rangle={_\Lambda}\langle g,g\rangle=P_g
 \end{equation*}
 and $P_g^*=P_g$.
 \par
 Now we suppose that ${_\Lambda}\langle g,g\rangle$ is a projection in $C^*(\Lambda,c)$. Then the following elementary computation yields the assertion: 
 \begin{eqnarray*} 
  {_\Lambda}\big\langle g\langle g,g\rangle{_{\Lambda^\circ}}-g,g\langle g,g\rangle{_{\Lambda^\circ}}-g\big\rangle&=&{_\Lambda}\big\langle {_\Lambda}\langle g,g\rangle g-g,{_\Lambda}\langle g,g\rangle g-g\big\rangle\\
  &=&{_\Lambda}\big\langle {_\Lambda}\langle g,g\rangle g,{_\Lambda}\langle g,g\rangle g\big\rangle -{_\Lambda}\big\langle g,{_\Lambda}\langle g,g\rangle g\big\rangle\\
  &-&{_\Lambda}\big\langle{_\Lambda}\langle g,g\rangle g,g\big\rangle+{_\Lambda}\langle g,g\rangle=0.
 \end{eqnarray*}
 In the case that $g\in M^1_s(\mathbb{R})$ or $\mathscr{S}(\mathbb{R})$, then the condition $g\langle g,g\rangle{_{{\Lambda^\circ}}}=g$ holds in $g\in M^1_s(\mathbb{R})$ or $\mathscr{S}(\mathbb{R})$. Consequently the preceding computations remain valid in $\mathcal{A}^1_s(\Lambda,c)$ or $\mathcal{A}^\infty(\Lambda,c)$.
\end{proof}
There is a class of $g$ in $V_{\Lambda^\circ}$ where the condition $g\langle g,g\rangle{_{\Lambda^\circ}}=g$ is fulfilled.  Namely those $g\in V{_{\Lambda^\circ}}$ such that $\langle g,g\rangle{_{\Lambda^\circ}}=1_{\ell^2(\Lambda^\circ)}$. 
We denote the set of all these $g$ the unit sphere $\mathbb{S}(V_{\Lambda^\circ})$ of $V{_{\Lambda^\circ}}$. The unit sphere $\mathbb{S}(V_{\Lambda^\circ})$ has an intrinsic description in terms of Gabor frames and goes by the name of {\it Wexler-Raz biorthogonality relations}.  
\par
The link between the rank-one module operators $\Theta_{g,h}^\Lambda$ and Gabor analysis is the observation that these are the so-called {\it Gabor frame-type operator} $\Theta_{g,h}^\Lambda$ and that $\Theta_{g,g}^\Lambda$ is the Gabor frame operator of the Gabor systems $\mathcal{G}(g,\Lambda)$.
If $\Theta_{g,g}^\Lambda$ is invertible on $L^2(\mathbb{R})$, then $\mathcal{G}(g,\Lambda)$ is a Gabor frame for $L^2(\mathbb{R})$, i.e. there exist $A,B>0$ such that 
\begin{equation*}
 A\|f\|_2^2\le \langle \Theta_{g,g}^\Lambda f,f\rangle_{L^2(\mathbb{R})}=\sum_{\lambda\in\Lambda}|\langle f,\pi(\lambda)g\rangle|^2\le B\|f\|_2^2
\end{equation*}
for all $L^2(\mathbb{R})$. An important consequence of the invertibility of the Gabor frame operator is the existence of discrete expansions for $f\in L^2(\mathbb{R})$: 
\begin{equation}
  f=\Theta_{g,h}^\Lambda f=\sum_{\lambda\in\Lambda}\langle f,\pi(\lambda)g\rangle\pi(\lambda)h
\end{equation}
for some $h\in L^2(\mathbb{R})$, a so-called {\it dual Gabor atom}. Among the various dual Gabor atoms there exists a {\it canonical dual Gabor atom} $\tilde{h}$ that is determined by the equation $(\Theta_{g,g}^\Lambda){h}_0=g$, i.e. ${h}_0=S_{g,\Lambda}^{-1}g$. In the case that $Cg=h_0$ for some constant $C$, then the (dual) Gabor frame $\mathcal{G}(g,\Lambda)$ is a {\it tight Gabor frame} for $L^2(\mathbb{R})$ and $h_0$ is often referred to as {\it tight Gabor atom}. 
\begin{theorem}\label{projGabor}
Let $\mathcal{G}(g,\Lambda)$ be a Gabor system on $L^2(\mathbb{R})$ with $g$ in $M^1_s(\mathbb{R})$ or $\mathscr{S}(\mathbb{R})$. Then $P_g={_\Lambda}\langle g,g\rangle$ is a projection in $\mathcal{A}^1_s(\Lambda,c)$ or $\mathcal{A}^\infty(\Lambda,c)$ if and only if one of the following condition holds:
\begin{enumerate}
 \item[(i)] $\mathcal{G}(g,\Lambda)$ is a tight Gabor frame for $L^2(\mathbb{R})$.
 \item[(ii)] $\mathcal{G}(g,\Lambda^\circ)$ is an orthogonal system.  
 \item[(iii)] $g\in\mathbb{S}(V_{\Lambda^\circ})$.
 \item[(iv)] $\langle g,\pi(\lambda^\circ)g\rangle=\mathrm{vol}(\Lambda)\delta_{\lambda^\circ,0}$ for all $\lambda^\circ\in\Lambda^\circ$.
\end{enumerate}
\end{theorem}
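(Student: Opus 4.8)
The plan is to reduce everything to the preceding lemma, which already says that $P_g={_\Lambda}\langle g,g\rangle$ is a projection if and only if $g\cdot\langle g,g\rangle_{\Lambda^\circ}=g$ (an identity which, for $g\in M^1_s(\mathbb R)$ or $\mathscr S(\mathbb R)$, holds inside that module and hence inside $\mathcal A^1_s(\Lambda,c)$ resp.\ $\mathcal A^\infty(\Lambda,c)$). I would then prove the cycle
\[
P_g\text{ a projection}\iff\mathrm{(iii)}\iff\mathrm{(iv)}\iff\mathrm{(ii)}\iff\mathrm{(i)},
\]
using the Wexler--Raz relations $\mathrm{(iv)}$ as the computational hub.

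First I would dispose of the ``algebraic'' equivalences among $\mathrm{(i)}$--$\mathrm{(iv)}$. For $\mathrm{(iii)}\Leftrightarrow\mathrm{(iv)}$: the element $\langle g,g\rangle_{\Lambda^\circ}\in\mathcal A^1_s(\Lambda^\circ,\overline c)$ is, by its very definition, a noncommutative Fourier series in the unitaries $\pi(\lambda^\circ)$ whose coefficient at $\lambda^\circ$ is a fixed multiple of $\langle g,\pi(\lambda^\circ)g\rangle$; since $\mathbf b\mapsto\pi_{\Lambda^\circ}(\mathbf b)$ is \emph{faithful} on $\ell^1(\Lambda^\circ,\overline c)$, the operator identity $\langle g,g\rangle_{\Lambda^\circ}=1$ is equivalent to the vanishing of all coefficients off $\lambda^\circ=0$ together with the value $\mathrm{vol}(\Lambda)$ of the diagonal one, which is verbatim $\mathrm{(iv)}$, and $g\in\mathbb S(V_{\Lambda^\circ})$ means exactly $\langle g,g\rangle_{\Lambda^\circ}=1$. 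For $\mathrm{(iv)}\Leftrightarrow\mathrm{(ii)}$ I would use the covariance relation \eqref{commrelsymp} to rewrite $\langle\pi(\lambda^\circ)g,\pi(\mu^\circ)g\rangle$ as a unimodular multiple of $\langle g,\pi(\mu^\circ-\lambda^\circ)g\rangle$, so that $\mathcal G(g,\Lambda^\circ)$ being an orthogonal system is the same as $\langle g,\pi(\nu^\circ)g\rangle=0$ for all $\nu^\circ\neq0$. For $\mathrm{(iv)}\Leftrightarrow\mathrm{(i)}$ I would invoke the Janssen representation of the Gabor frame operator,
\[
\Theta^\Lambda_{g,g}=\mathrm{vol}(\Lambda)^{-1}\sum_{\lambda^\circ\in\Lambda^\circ}\langle g,\pi(\lambda^\circ)g\rangle\,\pi(\lambda^\circ),
\]
which is itself a consequence of the Fundamental Identity (Proposition \ref{FIGA}); by faithfulness $\Theta^\Lambda_{g,g}$ is a scalar multiple of the identity --- i.e.\ $\mathcal G(g,\Lambda)$ is tight --- precisely when the off-diagonal coefficients vanish. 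The normalisation of the diagonal term that upgrades ``tight''/``orthogonal'' to the $\delta$-normalisation in $\mathrm{(iv)}$ is supplied by combining these with the projection property, which is the next step.

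It remains to link ``$P_g$ a projection'' to this circle. One direction is free: if $\langle g,g\rangle_{\Lambda^\circ}=1$ then $g\langle g,g\rangle_{\Lambda^\circ}=g$, so $P_g$ is a projection by the preceding lemma. The reverse implication is the heart of the matter and the place I expect the main difficulty. From $g\langle g,g\rangle_{\Lambda^\circ}=g$ one checks, just as in the proof of the preceding lemma, that $b:=\langle g,g\rangle_{\Lambda^\circ}$ is a self-adjoint idempotent and that $g(1-b)=0$; the real work is to upgrade ``$b$ is a projection'' to ``$b=1$''. I would try to do this by feeding $g(1-b)=0$ back through the expansion of $b$ and Rieffel's associativity condition \eqref{eq:AssCond} (equivalently the FIGA) to force the off-diagonal coefficients $\langle g,\pi(\lambda^\circ)g\rangle$, $\lambda^\circ\neq0$, to vanish --- so that $b$ is scalar --- and then using that $\mathcal A^1_s(\Lambda^\circ,\overline c)$ is spectrally invariant (Proposition \ref{SpectInvNCT}) in $C^*(\Lambda^\circ,\overline c)$, which carries a faithful trace $\tau$ with $\tau(b)=\mathrm{vol}(\Lambda)^{-1}\|g\|_2^2$, so that idempotency of the scalar $b$ gives $b\in\{0,1\}$ and $g\neq0$ rules out $b=0$; this simultaneously delivers $\mathrm{(iii)}$, $\mathrm{(iv)}$ (including the $\delta$-normalisation) and the remaining links. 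The subtle point --- and the step I expect to cost the most --- is exactly the passage from the single module identity $g(1-b)=0$ to the pointwise vanishing of the Janssen coefficients, since $\mathcal G(g,\Lambda^\circ)$ need not be a Riesz sequence, so a naive ``compare coefficients'' argument is unavailable and one genuinely needs the equivalence-bimodule machinery of Section~2. Finally, the descent from $C^*(\Lambda,c)$ to $\mathcal A^1_s(\Lambda,c)$ and $\mathcal A^\infty(\Lambda,c)$ is immediate, since for $g\in M^1_s(\mathbb R)$ resp.\ $\mathscr S(\mathbb R)$ all the identities above already hold in those subalgebras, which are spectrally invariant by Proposition \ref{SpectInvNCT}.
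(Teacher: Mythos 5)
Your handling of the equivalences (i)$\Leftrightarrow$(ii)$\Leftrightarrow$(iii)$\Leftrightarrow$(iv) and of the implication (iii)$\Rightarrow$``$P_g$ is a projection'' is correct and essentially the paper's own route (Janssen representation as the associativity condition, covariance, comparison of Janssen coefficients via faithfulness of $\pi_{\Lambda^\circ}$; your coefficient argument for (iii)$\Leftrightarrow$(iv) is in fact more complete than the paper's one-line trace remark). The genuine gap is exactly the step you yourself single out as the heart of the matter, and it cannot be repaired: from $g\cdot b=g$ with $b=\langle g,g\rangle_{\Lambda^\circ}$ one cannot force the off-diagonal coefficients $\langle g,\pi(\lambda^\circ)g\rangle$, $\lambda^\circ\neq0$, to vanish, because the implication ``$P_g$ a projection $\Rightarrow$ (i)--(iv)'' is false without further hypotheses. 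Concretely, take $\Lambda=\mathbb{Z}\times2\mathbb{Z}$, so $\Lambda^\circ=\frac{1}{2}\mathbb{Z}\times\mathbb{Z}$ and $\mathrm{vol}(\Lambda)=2$. Let $h\in\mathscr{S}(\mathbb{R})$ be the canonical Parseval window obtained from the Gaussian over $\Lambda^\circ$ (it exists and is Schwartz by Lyubarskii--Seip together with the spectral-invariance argument the paper itself uses), and set $g=\sqrt{2}\,h$. Janssen's representation of the $\Lambda^\circ$-frame operator plus faithfulness gives $\langle g,\pi(\mu)g\rangle=\delta_{\mu,0}$ for $\mu\in\Lambda$, i.e.\ $\mathcal{G}(g,\Lambda)$ is an orthonormal system. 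Then $P_g={}_\Lambda\langle g,g\rangle=1$ is certainly a projection in $\mathcal{A}^\infty(\Lambda,c)$, and $\Theta^\Lambda_{g,g}$ is the orthogonal projection onto the proper closed span of $\mathcal{G}(g,\Lambda)$, so indeed $g\cdot b=\Theta^\Lambda_{g,g}g=g$; but (i) fails ($\mathrm{vol}(\Lambda)=2>1$, so $\mathcal{G}(g,\Lambda)$ is not even complete), and (ii)--(iv) fail as well: if all off-diagonal coefficients $\langle g,\pi(\lambda^\circ)g\rangle$ vanished, Janssen would give $\Theta^\Lambda_{g,g}=\frac{1}{2}\,\mathrm{Id}$, which is not a projection. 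So here $b$ is a proper projection in $\mathcal{A}^\infty(\Lambda^\circ,\overline{c})$ with nonvanishing off-diagonal coefficients, and no amount of FIGA/associativity manipulation can extract $b=1$ from $g(1-b)=0$.

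To be fair, the paper's proof silently omits this direction too: it proves (sketchily) the equivalences among (i)--(iv) and obtains the projection property from (iii) via the preceding lemma, while the converse is asserted rather than argued; so your diagnosis of where the difficulty sits is accurate, but the cure has to be an extra hypothesis or a restatement, not a cleverer algebraic argument. What ``$P_g$ is a projection'' actually characterizes is that $b=\langle g,g\rangle_{\Lambda^\circ}$ is a projection, i.e.\ that $\mathcal{G}(g,\Lambda)$ is a normalized tight frame for its closed $\pi(\Lambda)$-invariant span; (i)--(iv) describe the special case $b=1$. Your trace observation $\mathrm{tr}(b)=\mathrm{vol}(\Lambda)^{-1}\|g\|_2^2$ cannot separate these cases, since nothing forces $\|g\|_2^2=\mathrm{vol}(\Lambda)$. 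The implication does become true by a one-line argument if you add the hypothesis that $\mathcal{G}(g,\Lambda)$ is complete in $L^2(\mathbb{R})$ (in particular, a frame, which is presumably the intended reading): then $\langle\Theta^\Lambda_{g,g}f,f\rangle=\sum_\lambda|\langle f,\pi(\lambda)g\rangle|^2$ shows $\Theta^\Lambda_{g,g}$ is injective, and an injective idempotent is the identity, whence $b=1$ and (i)--(iv) follow. Relatedly, (i) as literally stated (tight with some bound $A$) is not equivalent to the projection property either: a tight frame with bound $A$ gives $P_g^2=AP_g$, so one needs $A=1$, i.e.\ $\|g\|_2^2=\mathrm{vol}(\Lambda)$; you gesture at this normalization issue, but it should be made explicit rather than absorbed into the projection property.
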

\begin{proof}
Recall that there are traces $\mathrm{tr}_\Lambda$ and $\mathrm{tr}_{\Lambda^\circ}$ on $C^*(\Lambda,c)$ and $C^*(\Lambda^\circ,\overline{c})$, where $\mathrm{tr}_\Lambda(A)=a_0$ and $\mathrm{tr}_{\Lambda^\circ}(B)=\mathrm{vol}(\Lambda)^{-1}b_0$ for $A=\sum a(\lambda)\pi(\lambda)$ and $B=\sum b(\lambda^\circ)\pi(\lambda^\circ)$.
  \begin{item} 
    \item[(i)$\Leftrightarrow$(ii)] The assumption $g,h$ in $M^1_s(\mathbb{R})$ implies the boundedness of the Gabor frame operators $\Theta_{g,g}^\Lambda$ on $L^2(\mathbb{R})$. Furthermore $\Theta_{g,g}^\Lambda$ has a Janssen representation 
\begin{equation}
  \Theta_{g,g}^\Lambda f={_\Lambda}\langle f,g\rangle\cdot g=f\cdot\langle g,g\rangle_{\Lambda^\circ}=\Theta_{f,g}^{\Lambda^\circ}g.
\end{equation}
In other words, the Janssen representation of Gabor frame-type operators is the associativity condition for ${_\Lambda}\langle.,.\rangle$ and $\langle.,.\rangle_{\Lambda^\circ}$. $\mathcal{G}(g,\Lambda)$ is a tight Gabor frame if and only if $\Theta_{g,g}^\Lambda$ is a multiple of the identity operator on $L^2(\mathbb{R})$ if and only if $\mathcal{G}(g,\Lambda)$ is an orthogonal system.
  \item[(ii)$\leftrightarrow$(iii)] This is just a reformulation of (i) in terms of the ${_\Lambda}\langle.,.\rangle$ inner product 
  \item[(iii)$\leftrightarrow$(iv)] By taking the trace of the assertion (iii) we get that
\begin{equation*} 
   \mathrm{tr}_{\Lambda}({_\Lambda}\langle g,g\rangle)= \langle g,g\rangle=\mathrm{vol}(\Lambda)^{-1}\delta_{\lambda^\circ,0}.
\end{equation*}   
  \end{item}
\end{proof}
The equivalence between (i) and (iv) goes by the name of {Wexler-Raz biorthogonality relations}. In the case of finite-dimensional Gabor frames this result was formulated by the engineers Raz and Wexler in \cite{rawe90}. The extension to the infinite-dimensional case was undertaken by several researchers \cite{dalala95,ja95,rosh97} and led to the duality theory of Gabor frames. We followed the approach of Janssen to duality theory. The Wexler-Raz biorthogonality condition may be considered as a Schur-type orthogonalization relation for the reducible representation $\pi_\Lambda$ of $\Lambda$ since it forces the representation $\pi_{\Lambda^\circ}$ of $\Lambda^\circ$ to be a multiple of the trivial representation. 
\par
In the preceding theorem we demonstrated that $g\in\mathbb{S}(V_{\Lambda^\circ})$ is equivalent to the tightness of the Gabor frame $\mathcal{G}(g,\Lambda)$. As noted before there is a canonical tight Gabor frame $\mathcal{G}(h_0,\Lambda)$ for $\tilde{g}=(\Theta_{g,g}^\Lambda)^{-1/2}g$. Janssen and Strohmer have shown in \cite{jast02} that the canonical tight Gabor atom has the following characterization:
Let $\mathcal{G}(g,\Lambda)$ be a Gabor frame for $L^2(\mathbb{R})$. Then the canonical tight Gabor atom $\tilde{g}$ minimizes $\|\tilde{g}-h\|_2$ among all $h$ generating a normalized tight Gabor frame. Note that $\mathrm{tr}_\Lambda({_\Lambda}\langle f,g\rangle)=\langle f,g\rangle$, i.e. $\|\tilde{g}-h\|_2^2=\mathrm{tr}_\Lambda({_\Lambda}\langle \tilde{g}-h,\tilde{g}-h\rangle)$.
\begin{theorem}\label{Gabframes}
Let $\mathcal{G}(g,\Lambda)$ be a Gabor frame for $L^2(\mathbb{R})$. If $g$ is in $M^1_s(\mathbb{R})$ or in $\mathscr{S}(\mathbb{R})$, then ${_\Lambda}\langle {g},\tilde{g}\rangle$ is a projection in $\mathcal{A}^1_s(\Lambda,c)$ or in $\mathcal{A}^\infty(\Lambda,c)$, respectively. Furthermore, $h_0$ minimizes $\|\tilde{g}-h\|_2^2=\mathrm{tr}_\Lambda({_\Lambda}\langle \tilde{g}-h,\tilde{g}-h\rangle)$ among all tight Gabor atoms $h$.
\end{theorem}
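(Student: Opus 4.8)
The plan is to reduce Theorem~\ref{Gabframes} to two ingredients already in place: Theorem~\ref{projGabor}, which characterizes when $_\Lambda\langle h,h\rangle$ is a projection, and the Janssen--Strohmer variational characterization of the canonical tight atom $\tilde g = (\Theta_{g,g}^\Lambda)^{-1/2}g$. First I would verify that $\tilde g$ lies in the same function space as $g$: since $g\in M^1_s(\mathbb{R})$ (resp.\ $\mathscr{S}(\mathbb{R})$) and $\Theta_{g,g}^\Lambda$ is an invertible element of the spectrally invariant algebra $\mathcal{A}^1_s(\Lambda,c)$ (resp.\ $\mathcal{A}^\infty(\Lambda,c)$), Proposition~\ref{SpectInvNCT} gives that $(\Theta_{g,g}^\Lambda)^{-1/2}$ again lies in that algebra via holomorphic functional calculus, and therefore $\tilde g = (\Theta_{g,g}^\Lambda)^{-1/2}g$ stays in $M^1_s(\mathbb{R})$ (resp.\ $\mathscr{S}(\mathbb{R})$) because the module $V$ is stable under the action of the smooth/$\ell^1_s$-algebra. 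This is the step I expect to require the most care, as one must check that the inverse square root is computed inside the small algebra, not merely in $C^*(\Lambda,c)$, and that the module action preserves $M^1_s$.

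Next I would show that $\mathcal{G}(\tilde g,\Lambda)$ is a normalized tight Gabor frame, i.e.\ that $\Theta_{\tilde g,\tilde g}^\Lambda = \mathrm{Id}_{L^2(\mathbb{R})}$. This is the standard computation $\Theta_{\tilde g,\tilde g}^\Lambda = (\Theta_{g,g}^\Lambda)^{-1/2}\,\Theta_{g,g}^\Lambda\,(\Theta_{g,g}^\Lambda)^{-1/2} = \mathrm{Id}$, using that $\tilde g = (\Theta_{g,g}^\Lambda)^{-1/2}g$ and that $(\Theta_{g,g}^\Lambda)^{-1/2}$ commutes with every $\pi(\lambda)$, $\lambda\in\Lambda$ (it is a $\Lambda$-invariant operator, being a function of the $\Lambda$-invariant operator $\Theta_{g,g}^\Lambda$). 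By the implication (i)$\Rightarrow$(iii) of Theorem~\ref{projGabor} applied to $\tilde g$ in place of $g$, tightness of $\mathcal{G}(\tilde g,\Lambda)$ is equivalent to $\tilde g\in\mathbb{S}(V_{\Lambda^\circ})$, hence $_\Lambda\langle\tilde g,\tilde g\rangle$ is a projection in $\mathcal{A}^1_s(\Lambda,c)$ (resp.\ $\mathcal{A}^\infty(\Lambda,c)$). Here one should note the harmless discrepancy between the statement's $_\Lambda\langle g,\tilde g\rangle$ and the object $_\Lambda\langle\tilde g,\tilde g\rangle$: since $\tilde g = (\Theta_{g,g}^\Lambda)^{-1/2}g$ with $(\Theta_{g,g}^\Lambda)^{-1/2}$ a positive $\Lambda$-invariant (hence adjointable module) operator, one has $_\Lambda\langle g,\tilde g\rangle = {}_\Lambda\langle (\Theta_{g,g}^\Lambda)^{-1/2}g,(\Theta_{g,g}^\Lambda)^{-1/2}g\rangle = {}_\Lambda\langle\tilde g,\tilde g\rangle$, using self-adjointness of $(\Theta_{g,g}^\Lambda)^{-1/2}$ and the module property of the inner product, so the two descriptions of the projection coincide.

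Finally, for the minimization statement I would invoke the Janssen--Strohmer theorem \cite{jast02} quoted just before the statement: among all $h$ generating a normalized tight Gabor frame, the canonical tight atom $\tilde g$ minimizes $\|\tilde g - h\|_2$. Translating via $\mathrm{tr}_\Lambda({}_\Lambda\langle f,g\rangle)=\langle f,g\rangle$, this reads $\|\tilde g-h\|_2^2 = \mathrm{tr}_\Lambda({}_\Lambda\langle\tilde g-h,\tilde g-h\rangle)$, and since $h_0$ (the canonical tight atom in the notation of the text) is precisely $\tilde g$, the minimum is attained at $h_0$ and equals $0$ when $h=h_0$; the point of recording it in trace form is that the quantity being minimized is the trace of the positive element $_\Lambda\langle\tilde g-h,\tilde g-h\rangle$ of $\mathcal{A}^1_s(\Lambda,c)$, which ties the Gabor-analytic optimality back to the operator-algebraic picture. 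No new estimates are needed beyond the spectral-invariance bookkeeping in the first step.
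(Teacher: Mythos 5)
Your overall route is the paper's: show that $\tilde g=(\Theta_{g,g}^\Lambda)^{-1/2}g$ stays in $M^1_s(\mathbb{R})$ resp.\ $\mathscr{S}(\mathbb{R})$ by spectral invariance, observe $\tilde g\in\mathbb{S}(V_{\Lambda^\circ})$, and apply Theorem \ref{projGabor}; the minimality claim is the quoted Janssen--Strohmer characterization \cite{jast02}, exactly as in the paper. But the step you yourself flag as the critical one is misstated: $\Theta_{g,g}^\Lambda$ is \emph{not} an element of $\mathcal{A}^1_s(\Lambda,c)$. As you correctly note two sentences later, the frame operator commutes with every $\pi(\lambda)$, $\lambda\in\Lambda$; an element of $C^*(\Lambda,c)$ with that property is central, and for generic (irrational) $\Lambda$ would have to be a scalar, so your two claims are incompatible. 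What is true, and what the paper's proof actually uses, is the Janssen representation $\Theta_{g,g}^\Lambda f=f\cdot\langle g,g\rangle_{\Lambda^\circ}$: the frame operator is right multiplication by $\langle g,g\rangle_{\Lambda^\circ}\in\mathcal{A}^1_s(\Lambda^\circ,\overline{c})$, whose coefficients $\mathrm{vol}(\Lambda)^{-1}\langle g,\pi(\lambda^\circ)g\rangle$ lie in $\ell^1_s(\Lambda^\circ)$ because $g\in M^1_s(\mathbb{R})$. Spectral invariance (Proposition \ref{SpectInvNCT} applied to the adjoint lattice) then gives $(\Theta_{g,g}^\Lambda)^{-1/2}$ in $\mathcal{A}^1_s(\Lambda^\circ,\overline{c})$ resp.\ $\mathcal{A}^\infty(\Lambda^\circ,\overline{c})$, and the \emph{right} module action of that algebra on $M^1_s(\mathbb{R})$ resp.\ $\mathscr{S}(\mathbb{R})$ keeps $\tilde g$ in the correct space. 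With this correction the remainder of your argument ($\Theta_{\tilde g,\tilde g}^\Lambda=\mathrm{Id}$, then (i)$\Leftrightarrow$(iii) of Theorem \ref{projGabor}) is precisely the paper's proof.

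Your reconciliation of ${_\Lambda}\langle g,\tilde g\rangle$ with ${_\Lambda}\langle\tilde g,\tilde g\rangle$ is also incorrect. Writing $S=\Theta_{g,g}^\Lambda$, moving the self-adjoint $\Lambda$-invariant operator $S^{-1/2}$ across the inner product yields only ${_\Lambda}\langle g,\tilde g\rangle={_\Lambda}\langle\tilde g,g\rangle={_\Lambda}\langle S^{-1/2}g,g\rangle$; there is a single factor of $S^{-1/2}$ present, so you cannot end up with $S^{-1/2}$ in both slots. Indeed $\big({_\Lambda}\langle g,\tilde g\rangle\big)^2={_\Lambda}\big\langle{_\Lambda}\langle g,\tilde g\rangle\cdot g,\tilde g\big\rangle={_\Lambda}\langle S^{1/2}g,\tilde g\rangle={_\Lambda}\langle g,g\rangle$, which differs from ${_\Lambda}\langle g,\tilde g\rangle$ unless $\mathcal{G}(g,\Lambda)$ is already a normalized tight frame, so ${_\Lambda}\langle g,\tilde g\rangle$ need not be idempotent. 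The object the theorem intends (compare the higher-dimensional version, Theorem \ref{GabframesHigher}, and the paper's own proof) is ${_\Lambda}\langle\tilde g,\tilde g\rangle$, which coincides with ${_\Lambda}\langle g,h_0\rangle$ for the canonical \emph{dual} atom $h_0=S^{-1}g$ (move the full $S^{-1}$, not $S^{-1/2}$); the printed ${_\Lambda}\langle g,\tilde g\rangle$ is best read as a typo. Your main argument does prove the intended statement, so simply delete the false identity; the trace identity $\mathrm{tr}_\Lambda({_\Lambda}\langle f,h\rangle)=\langle f,h\rangle$ and the citation of \cite{jast02} for the minimization are fine and match the paper.
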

\begin{proof}
  By the spectral invariance of $\mathcal{A}^1(\Lambda^\circ,\overline{c})$ and $\mathcal{A}^\infty(\Lambda^\circ,\overline{c})$ in $C^*(\Lambda^\circ,c)$ and by the Janssen representation of the Gabor frame operator $\Theta_{g,g}^\Lambda$ we get $(\Theta_{g,g}^\Lambda)^{-1/2}$ in $\mathcal{A}^1(\Lambda^\circ,\overline{c})$ and $\mathcal{A}^\infty(\Lambda^\circ,\overline{c})$, respectively. Consequently $(\Theta_{g,g}^\Lambda)^{-1/2}g$ is in $M^1_s(\mathbb{R})$ and $\mathscr{S}(R)$, respectively. Observe that $\tilde{g}\in\mathbb{S}(V_{\Lambda^\circ})$ and an application of the preceding theorem yields the desired assertion.
\end{proof}
Before we are able to draw some consequences on projections in noncommutative tori we have to recall well-known results about Gabor systems for the following Gabor atoms $g_1,g_2,g_3$, where $g_1(t)=2^{1/4}e^{-\pi t^2}$ is the Gaussian, $g_2(t)=(\frac{\pi}{2})^{1/2}\frac{1}{\cosh(\pi t)}$ the hyperbolic secant and $g_3(t)=e^{-|t|}$ the two-sided exponential. 
\begin{proposition}\label{JanssenSeip}
The Gabor systems $\mathcal{G}(g_1,\mathbb{Z}\times\theta\mathbb{Z})$, $\mathcal{G}(g_2,\mathbb{Z}\times\theta\mathbb{Z})$ and $\mathcal{G}(g_3,\mathbb{Z}\times\theta\mathbb{Z})$ are Gabor frames for $L^2(\mathbb{R})$ if and only if $\theta<1$.
\end{proposition}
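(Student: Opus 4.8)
\emph{Structure of the argument.} The claim splits into an easy necessity part and a hard sufficiency part, and the sufficiency part is genuinely three separate known theorems bundled together, one for each window. So the plan is: dispose of necessity by density plus Balian--Low, and then invoke the three sharp frame-set results of Lyubarskii/Seip--Wallsten and of Janssen.

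\emph{Necessity.} Put $\Lambda=\mathbb{Z}\times\theta\mathbb{Z}$, so that $\mathrm{vol}(\Lambda)=\theta$. First, for \emph{any} $g\in L^2(\mathbb{R})$ the density theorem of Gabor analysis forces $\theta\le 1$ whenever $\mathcal G(g,\Lambda)$ is a frame (in the present language: by the Janssen representation $\Theta_{g,g}^\Lambda=\langle g,g\rangle_{\Lambda^\circ}$ as an operator on $L^2(\mathbb{R})$, and $\langle g,g\rangle_{\Lambda^\circ}$ cannot be invertible in $C^*(\Lambda^\circ,\overline c)$ once $\mathrm{vol}(\Lambda)>1$). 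To rule out the critical value $\theta=1$ I would invoke the Balian--Low theorem in its classical moment form: each of $g_1,g_2,g_3$ satisfies $t\,g_i(t)\in L^2(\mathbb{R})$ and $\xi\,\widehat{g_i}(\xi)\in L^2(\mathbb{R})$ --- immediate for $g_1,g_2$ since both $g_i$ and $\widehat{g_i}$ decay exponentially, and for $g_3=e^{-|t|}$ because $\widehat{g_3}(\xi)=2(1+4\pi^2\xi^2)^{-1}$ so that $\xi\,\widehat{g_3}\in L^2(\mathbb{R})$ --- hence none of $g_1,g_2,g_3$ can generate a Gabor frame at density one. Thus $\theta<1$ is necessary.

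\emph{Sufficiency.} Here the plan is simply to assemble the three sharp results. For the Gaussian, that $\mathcal G(g_1,a\mathbb{Z}\times b\mathbb{Z})$ is a frame exactly when $ab<1$ is the theorem of Lyubarskii and of Seip--Wallsten: the Bargmann transform identifies $L^2(\mathbb{R})$ with the Fock space of entire functions and turns the frame inequality for $\mathcal G(g_1,\Lambda)$ into a sampling problem there, resolved by Seip's density characterization of Fock-space sampling sequences, the sharp threshold being critical Beurling density, i.e. $ab<1$; taking $a=1$, $b=\theta$ gives the claim for $g_1$. For the hyperbolic secant, the statement that $\mathcal G(g_2,a\mathbb{Z}\times b\mathbb{Z})$ is a frame for every $ab<1$ is Janssen's theorem on hyperbolic secants, whose proof uses the Fourier self-duality $\widehat{g_2}=g_2$ together with the well-controlled zero set of a Bargmann/Zak-type transform attached to $g_2$ to run an argument parallel to the Gaussian case. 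For the two-sided exponential, the corresponding statement --- a frame iff $ab<1$ --- is again due to Janssen; its proof exploits the special structure of $g_3$ as a totally positive function (its Fourier transform being the rational function $2(1+4\pi^2\xi^2)^{-1}$, with purely imaginary poles) to produce a uniform lower frame bound for all $ab<1$. In every case $\theta<1$ is precisely the density condition $ab<1$ at $a=1$, $b=\theta$, which completes the proof.

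\emph{Main obstacle.} All the depth sits in the sufficiency direction, and there is no uniform elementary argument: each window needs its own hard analysis (complex-analytic sampling theory in the Fock space for $g_1$; fine zero-counting for Zak/Bargmann-type transforms, respectively totally positive structure, for $g_2$ and $g_3$). Within the present paper these three theorems are used as black boxes, so the only auxiliary point that actually has to be checked here --- and the only one relevant for the subsequent applications via Theorems \ref{projGabor} and \ref{Gabframes} --- is window membership: $g_1,g_2\in\mathscr{S}(\mathbb{R})$ (obvious for the Gaussian; for the hyperbolic secant because it is $C^\infty$ with all derivatives decaying exponentially and is Fourier self-dual), whereas $g_3=e^{-|t|}$ lies in $M^1_s(\mathbb{R})$ only for $s<1$, the derivative jump at the origin forcing $V_\varphi g_3(x,\omega)=O(|\omega|^{-2})$ as $|\omega|\to\infty$. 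Establishing this last, borderline decay estimate on $V_\varphi g_3$ is the one small but non-automatic computation one must carry out before feeding $g_3$ into the projection machinery.
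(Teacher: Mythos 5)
Your proposal is correct and follows essentially the same route as the paper, which offers no independent proof of Proposition \ref{JanssenSeip} but simply cites the sharp frame-set results of Lyubarskij and Seip for the Gaussian, Janssen--Strohmer for the hyperbolic secant, and Janssen for the two-sided exponential, exactly the black boxes you invoke. Your added necessity argument via the density theorem and Balian--Low, and the remarks on window membership, are correct but redundant, since the cited characterizations already contain the ``only if'' direction.
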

Lyubarskij and Seip proved the result for the Gaussian $g_1$ in \cite{ly92,se92-1}. The statement for $g_2$ was obtained by Janssen and Strohmer in \cite{jast02-1}. Later Janssen was able to settle the case $g_3$ in \cite{ja03-1}. 
\par
The main result allows us to link the existence of Gabor frames to the construction of projections in noncommutative tori, which is based on the seminal contribution of Janssen in \cite{ja95}. Namely the Janssen representation of Gabor operators, i.e. the associativity condition for the noncommutative tori valued inner products, turns the problem of the construction of Gabor frames into a problem about the invertibility of operators in $C^*(\Lambda^\circ,\overline{c})$. Following Janssen's work \cite{ja95} Gr\"ochenig and Leinert interpreted Janssen's result in terms of spectral invariant subalgebras of $C^*(\Lambda^\circ,\overline{c})$ \cite{grle04}. In the following theorem we show that results in Gabor analysis provide a way to smooth projections in noncommutative tori and we give an example of a function, $g_3$, that gives not a projection in the smooth noncommutative torus. Namely $g_3$ is in Feichtinger's algebra $M^1(\mathbb{R})$ but not in $\mathscr{S}(\mathbb{R})$.
\par
For the sake of simplicity we denote $\mathcal{A}^1(\mathbb{Z}\times\theta\mathbb{Z},c)$ and $\mathcal{A}^\infty(\mathbb{Z}\times\theta\mathbb{Z},c)$ by $\mathcal{A}^1_\theta$ and $\mathcal{A}^\infty_\theta$. In an analogous manner we denote $\mathcal{A}^1(\tfrac{1}{\theta}\mathbb{Z}\times\mathbb{Z},\overline{c})$ and $\mathcal{A}^\infty(\tfrac{1}{\theta}\mathbb{Z}\times\mathbb{Z},\overline{c})$ by $\mathcal{A}^1_{1/\theta}$ and $\mathcal{A}^\infty_{1/\theta}$. Furthermore we abbreviate the $C^*(\mathbb{Z}\times\theta\mathbb{Z},c)$-valued inner product by ${_\theta}\langle.,.\rangle$.
\begin{theorem}
Let $g_1$ be the Gaussian, $g_2$ be the hyperbolic secant and $g_3$ the one-sided exponential. Then 
${_\theta}\langle \tilde{g}_1,\tilde{g}_1\rangle$ and ${_\theta}\langle\tilde{g}_2,\tilde{g}_2\rangle$ are projections in $\mathcal{A}^\infty_\theta$ if and only if $\theta<1$. Furthermore ${_\theta}\langle\tilde{g}_1,\tilde{g}_1\rangle$ are projections in $\mathcal{A}^1_\theta$ if and only if $\theta<1$
\end{theorem}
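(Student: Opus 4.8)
\emph{Proof plan.} The plan is to obtain the statement as a direct combination of Proposition~\ref{JanssenSeip} and Theorem~\ref{Gabframes}, once the relevant function-space memberships of the three atoms are recorded. Throughout put $\Lambda=\mathbb{Z}\times\theta\mathbb{Z}$, so that $\mathrm{vol}(\Lambda)=\theta$, $\Lambda^\circ=\tfrac{1}{\theta}\mathbb{Z}\times\mathbb{Z}$, and recall $\mathcal{A}^\infty_\theta=\mathcal{A}^\infty(\mathbb{Z}\times\theta\mathbb{Z},c)$ and $\mathcal{A}^1_\theta=\mathcal{A}^1(\mathbb{Z}\times\theta\mathbb{Z},c)$. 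The canonical tight atom attached to a Gabor frame $\mathcal{G}(g,\Lambda)$ is $\tilde g=(\Theta^\Lambda_{g,g})^{-1/2}g$, as in the proof of Theorem~\ref{Gabframes}.

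First I would record the function-space facts. The atoms $g_1(t)=2^{1/4}e^{-\pi t^2}$ and $g_2(t)=(\pi/2)^{1/2}(\cosh\pi t)^{-1}$ belong to $\mathscr{S}(\mathbb{R})=\bigcap_{s\ge0}M^1_s(\mathbb{R})$ — both are smooth with all derivatives decaying (super)exponentially, which is a routine check — whereas $g_3(t)=e^{-|t|}$ lies in the Feichtinger algebra $M^1(\mathbb{R})=M^1_0(\mathbb{R})$ but, failing to be differentiable at $0$, does not lie in $\mathscr{S}(\mathbb{R})$; the membership $g_3\in M^1(\mathbb{R})$ is known from the literature (see \cite{ja03-1}). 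For the ``if'' direction assume $\theta<1$. Proposition~\ref{JanssenSeip} then gives that $\mathcal{G}(g_i,\Lambda)$ is a Gabor frame for $L^2(\mathbb{R})$ for $i=1,2,3$, and Theorem~\ref{Gabframes} applies: for $i=1,2$ it yields $(\Theta^\Lambda_{g_i,g_i})^{-1/2}\in\mathcal{A}^\infty_{1/\theta}$, hence $\tilde g_i\in\mathscr{S}(\mathbb{R})\cap\mathbb{S}(V_{\Lambda^\circ})$, hence by Theorem~\ref{projGabor}(iii) the element ${_\theta}\langle\tilde g_i,\tilde g_i\rangle$ is a projection in $\mathcal{A}^\infty_\theta$; running the same argument inside the $\ell^1$-algebra shows that ${_\theta}\langle\tilde g_1,\tilde g_1\rangle$ — and, since $g_3$ still lies in $M^1(\mathbb{R})$, also ${_\theta}\langle\tilde g_3,\tilde g_3\rangle$ — is a projection in $\mathcal{A}^1_\theta$, while for $g_3$ no $\mathcal{A}^\infty_\theta$ conclusion is available precisely because $g_3\notin\mathscr{S}(\mathbb{R})$.

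For the converse I would argue by contraposition. If $\theta\ge1$, Proposition~\ref{JanssenSeip} says that none of $\mathcal{G}(g_i,\Lambda)$ is a Gabor frame for $L^2(\mathbb{R})$, so $\Theta^\Lambda_{g_i,g_i}$ is not bounded below on $L^2(\mathbb{R})$ and in particular not invertible; consequently $(\Theta^\Lambda_{g_i,g_i})^{-1/2}$, the canonical tight atom $\tilde g_i$, and the element ${_\theta}\langle\tilde g_i,\tilde g_i\rangle$ are not even defined, hence certainly not projections. It is worth noting that $\theta=1$ is genuinely the right threshold: by Theorem~\ref{projGabor} any $h\in M^1(\mathbb{R})$ for which ${_\theta}\langle h,h\rangle$ is a projection would have to generate a tight Gabor frame $\mathcal{G}(h,\Lambda)$, which the density theorem already rules out for $\theta>1$, while the critical case $\theta=1$ is excluded by the sharp estimates underlying Proposition~\ref{JanssenSeip} (a Balian--Low type obstruction).

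The only place where any depth enters is Proposition~\ref{JanssenSeip}, whose three cases rest on the hard results of Lyubarskii and Seip for the Gaussian, of Janssen and Strohmer for the hyperbolic secant, and of Janssen for the two-sided exponential, together with the membership $g_3\in M^1(\mathbb{R})\setminus\mathscr{S}(\mathbb{R})$. Granting those, the proof is a mechanical passage through the module picture of Section~2 and presents no real obstacle; the single point requiring care is the dichotomy $\mathscr{S}(\mathbb{R})$ versus $M^1(\mathbb{R})$ for $g_3$, which is exactly what forces the conclusion to drop from $\mathcal{A}^\infty_\theta$ to $\mathcal{A}^1_\theta$ for the exponential. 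In effect the work of the theorem has already been done by translating Rieffel's projection condition into the Wexler--Raz relations.
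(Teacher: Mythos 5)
Your argument is correct and follows essentially the same route as the paper: combine Proposition~\ref{JanssenSeip} with the Janssen representation/spectral-invariance mechanism behind Theorem~\ref{Gabframes} (and Theorem~\ref{projGabor}) to get the canonical tight atom $\tilde g_i$ in the right function space, and use the dichotomy $g_1,g_2\in\mathscr{S}(\mathbb{R})$ versus $g_3\in M^1(\mathbb{R})\setminus\mathscr{S}(\mathbb{R})$ to explain the drop from $\mathcal{A}^\infty_\theta$ to $\mathcal{A}^1_\theta$ for the exponential. The only cosmetic deviation is that you certify $g_3\in M^1(\mathbb{R})$ by citation rather than via Okoudjou's criterion ($g,g',g''\in L^1\Rightarrow g\in M^1$) as the paper does, which does not affect the argument.
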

\begin{proof}
Note that $g_1,g_2$ are elements of $\mathscr{S}(\mathbb{R})$. Therefore $\langle g_i,\pi(\lambda)g_i\rangle$ is a sequence of rapid decay for $i=1,2$. By the Janssen representation $S_{g_i,\mathbb{Z}\times\theta\mathbb{Z}}$ is a Gabor frame if and only if $\langle g_i,g_i\rangle_{\Lambda^\circ}$ is invertible in $\mathcal{A}_{1/\theta}$ for $i=1,2$. By the spectral invariance of $\mathcal{A}^\infty_{1/\theta}$ in $\mathcal{A}_{1/\theta}$ we actually have that $\langle g_i,g_i\rangle_{\Lambda^\circ}$ is an element of $\mathcal{A}^\infty_{1/\theta}$ for $i=1,2$. Consequently, ${_\theta}\langle \tilde{g}_1,\tilde{g}_1\rangle$ and ${_\theta}\langle \tilde{g}_2,\tilde{g}_2\rangle$ are projections in $\mathcal{A}^\infty_{1/\theta}$. 
\par
The final assertion is that ${_\theta}\langle \tilde{g}_3,\tilde{g}_3\rangle$ is a projection in $\mathcal{A}^1_\theta$ if and only if $\theta<1$. We have to check that $g_3$ is not a Schwartz function, but it is an element of Feichtinger's algebra $M^1(\mathbb{R})$. An elementary calculation yields that $g_3$ is not in $\mathscr{S}(\mathbb{R})$. The fact that $g_3$ is in $M^1(\mathbb{R})$ can be established in various ways. We want to refer to a result of Okoudjou. In \cite{ok04} he proved that  $g,g^\prime,g^{\prime\prime}\in L^1(\mathbb{R})$ implies that $g\in M^1(\mathbb{R})$. Now straightforward calculations yield that $g_3,g_3^\prime,g_3^{\prime\prime}$ are in $L^1(\mathbb{R})$ and therefore $g_3$ is in $M^1(\mathbb{R})$. Consequently ${_\theta}\langle g_3,g_3\rangle$ is a projection in $\mathcal{A}^1_\theta$ but not in the smooth noncommutative torus $\mathcal{A}^\infty_\theta$ .
\end{proof}
Since $g_1$ and $g_2$ are invariant with respect to the Fourier transform: $\mathcal{F}g_1=g_1,\mathcal{F}g_2=g_2$, the associated projections fit into the framework of Boca in \cite{bo99-2}. Our approach to projections in noncommutative tori $C^*(\Lambda,c)$ provides that ${_\theta}\langle g_1,g_1\rangle$ is invertible for $\theta<1$, which improves the result in \cite{bo99-2} where the invertibility is established for $\theta<0.948$, and on the other hand it shows that this actually characterizes the invertibility of ${_\theta}\langle g_1,g_1\rangle$. Boca's proof relies on a series of results on theta functions that does not allow one to conclude if the result in \cite{bo99-2} holds if and only if $\theta<1$.  
\section{Final remarks}
In the preceding section we constructed projections in $\mathcal{A}_\theta$, because in this case we can apply results of Janssen, Lyubarskij and Seip on Gabor frames for $L^2(\mathbb{R})$. The link between tight Gabor frames and projections in noncommutative tori remains valid in the higher-dimensional case. Recall that the higher-dimensional torus $\mathcal{A}_\Theta$ is defined via a $d\times d$ skew-symmetric matrix $\Theta$ instead of the real number $\theta$. Note that $\mathcal{A}_\Theta$ may be considered as twisted group $C^*$-algebra $C^*(\Lambda,c)$ for a lattice $\Lambda$ in $\mathbb{R}^{2d}$. The higher-dimensional variants of $\mathcal{A}^1_s(\Lambda,c)$ and $\mathcal{A}^\infty(\Lambda,c)$ for $\Lambda$ in $\mathbb{R}^{2d}$ are defined as in the two-dimensional case. The higher-dimensional variant of Theorem \ref{projGabor} holds:
\begin{theorem}\label{projGaborHigher}
Let $\mathcal{G}(g,\Lambda)$ be a Gabor system on $L^2(\mathbb{R}^d)$ for $g\in M^1_s(\mathbb{R}^d)$ or $\mathscr{S}(\mathbb{R}^d)$. Then $P_g={_\Lambda}\langle g,g\rangle$ is a projection in $\mathcal{A}^1_s(\Lambda,c)$ or $\mathcal{A}^\infty(\Lambda,c)$ if and only if one of the following condition holds:
\begin{enumerate}
 \item[(i)] $\mathcal{G}(g,\Lambda)$ is a tight Gabor frame for $L^2(\mathbb{R}^d)$.
 \item[(ii)] $\mathcal{G}(g,\Lambda^\circ)$ is an orthogonal system.  
 \item[(iii)] $g\in\mathbb{S}(V{_{\Lambda^\circ}})$.
 \item[(iv)] $\langle g,\pi(\lambda^\circ)g\rangle=\mathrm{vol}(\Lambda)\delta_{\lambda^\circ,0}$ for all $\lambda^\circ\in\Lambda^\circ$.
\end{enumerate}
\end{theorem}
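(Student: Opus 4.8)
The plan is to run through the proof of Theorem~\ref{projGabor} essentially verbatim, the point being that not a single step there used $d=1$. First I would observe that all the relevant objects make sense for a lattice $\Lambda$ in $\mathbb{R}^{2d}$: the projective representation $z\mapsto\pi(z)$ of $\mathbb{R}^{2d}$ with the symplectic cocycle $c_{\mathrm{symp}}$, the adjoint lattice $\Lambda^\circ\subset\mathbb{R}^{2d}$, the twisted group algebras $\mathcal{A}^1_s(\Lambda,c)$ and $\mathcal{A}^\infty(\Lambda,c)$, and the modulation spaces $M^1_s(\mathbb{R}^d)$ together with $\mathscr{S}(\mathbb{R}^d)=\bigcap_{s\ge0}M^1_s(\mathbb{R}^d)$. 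The two genuinely nontrivial inputs also persist: the spectral invariance of $\mathcal{A}^1_s(\Lambda,c)$ and $\mathcal{A}^\infty(\Lambda,c)$ in $C^*(\Lambda,c)$ (Proposition~\ref{SpectInvNCT} and its $\Lambda^\circ$-analogue), and the Fundamental Identity of Gabor Analysis (Proposition~\ref{FIGA}), which holds for $f,g,h,k\in M^1_s(\mathbb{R}^d)$ or $\mathscr{S}(\mathbb{R}^d)$ and any lattice in $\mathbb{R}^{2d}$. With FIGA in hand, the associativity condition \eqref{eq:AssCond} holds, so $M^1_s(\mathbb{R}^d)$ is an equivalence bimodule between $\mathcal{A}^1_s(\Lambda,c)$ and $\mathcal{A}^1_s(\Lambda^\circ,\overline{c})$, and $\mathscr{S}(\mathbb{R}^d)$ between the smooth tori; this is the higher-dimensional form of the equivalence-bimodule theorem of Section~2, and is precisely the generality in which Rieffel and Connes worked in \cite{ri88,co80}. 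The characterization of when $P_g={_\Lambda}\langle g,g\rangle$ is a projection, namely $g\langle g,g\rangle_{\Lambda^\circ}=g$, together with the sufficient condition $g\in\mathbb{S}(V_{\Lambda^\circ})$, then goes through unchanged.

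Granting this, I would carry out the four equivalences exactly as in Section~3. The hypothesis $g\in M^1_s(\mathbb{R}^d)$ makes the Gabor frame-type operator $\Theta_{g,g}^\Lambda$ bounded on $L^2(\mathbb{R}^d)$ and, via \eqref{eq:AssCond}, gives its Janssen representation $\Theta_{g,g}^\Lambda f=f\cdot\langle g,g\rangle_{\Lambda^\circ}$. Then $\mathcal{G}(g,\Lambda)$ is a tight frame for $L^2(\mathbb{R}^d)$ iff $\Theta_{g,g}^\Lambda$ is a scalar multiple of the identity iff $\langle g,g\rangle_{\Lambda^\circ}$ is a scalar element of $C^*(\Lambda^\circ,\overline{c})$ iff $\langle g,\pi(\lambda^\circ)g\rangle=0$ for all $\lambda^\circ\ne0$ in $\Lambda^\circ$, which is exactly the statement that $\mathcal{G}(g,\Lambda^\circ)$ is an orthogonal system; this is (i)$\Leftrightarrow$(ii), and rewriting it through ${_\Lambda}\langle g,g\rangle\cdot g=g\cdot\langle g,g\rangle_{\Lambda^\circ}$ and the projection criterion gives (ii)$\Leftrightarrow$(iii). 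For (iii)$\Leftrightarrow$(iv) I would apply the trace $\mathrm{tr}_{\Lambda^\circ}$ on $C^*(\Lambda^\circ,\overline{c})$, which retains the normalization $\mathrm{tr}_{\Lambda^\circ}(B)=\mathrm{vol}(\Lambda)^{-1}b_0$ and satisfies $\mathrm{tr}_{\Lambda^\circ}(\langle f,g\rangle_{\Lambda^\circ})=\mathrm{vol}(\Lambda)^{-1}\langle f,g\rangle$, to the identity $\langle g,g\rangle_{\Lambda^\circ}=1$, reading off $\langle g,\pi(\lambda^\circ)g\rangle=\mathrm{vol}(\Lambda)\delta_{\lambda^\circ,0}$; Proposition~\ref{SpectInvNCT} then places $(\Theta_{g,g}^\Lambda)^{-1/2}$, and hence the relevant elements, in $\mathcal{A}^1_s$ or $\mathcal{A}^\infty$ as appropriate, so $P_g$ lies in the stated subalgebra.

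I do not expect a substantive obstacle: the real content of the theorem is that the Section~3 argument is dimension-agnostic, and the only place where attention is needed is the bookkeeping of the volume factor $\mathrm{vol}(\Lambda)$, which appears in the right action of $\mathcal{A}^1_s(\Lambda^\circ,\overline{c})$, in the inner product $\langle\cdot,\cdot\rangle_{\Lambda^\circ}$, and in $\mathrm{tr}_{\Lambda^\circ}$; these must be tracked consistently so that (iv) emerges with the factor $\mathrm{vol}(\Lambda)$ and not its inverse, and I would verify this on the diagonal entry $\lambda^\circ=0$. It is also worth noting explicitly that for $\Lambda\subset\mathbb{R}^{2d}$ the adjoint lattice $\Lambda^\circ$ is again a lattice and the whole construction is symmetric in $\Lambda$ and $\Lambda^\circ$, so the Ron--Shen-type duality underlying (i)$\Leftrightarrow$(ii) is available in exactly the same form as in the plane.
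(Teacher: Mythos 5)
Your proposal is correct and is exactly what the paper intends: the paper states Theorem~\ref{projGaborHigher} without a separate argument, relying on the fact that the proof of Theorem~\ref{projGabor} (FIGA, the Janssen representation/associativity condition, spectral invariance, and the traces) is dimension-independent, which is precisely what you verify. Your only point of care—the bookkeeping of the $\mathrm{vol}(\Lambda)$ normalization in $\langle\cdot,\cdot\rangle_{\Lambda^\circ}$ and $\mathrm{tr}_{\Lambda^\circ}$—is a reasonable one, and matches the level of detail in the paper's own two-dimensional proof.
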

Furthermore we have that the Theorem \ref{Gabframes} holds in the higher-dimensional case. 
\begin{theorem}\label{GabframesHigher}
Let $\mathcal{G}(g,\Lambda)$ be a Gabor frame for $L^2(\mathbb{R}^d)$. If $g$ is in $M^1_s(\mathbb{R}^d)$ or in $\mathscr{S}(\mathbb{R}^d)$, then ${_\Lambda}\langle \tilde{g},\tilde{g}\rangle$ is a projection in $\mathcal{A}^1_s(\Lambda,c)$ or in $\mathcal{A}^\infty(\Lambda,c)$.
\end{theorem}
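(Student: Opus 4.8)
The plan is to reproduce the argument of Theorem~\ref{Gabframes} verbatim; the only point that really needs to be checked is that every structural ingredient used there is available for an arbitrary lattice $\Lambda$ in $\mathbb{R}^{2d}$ acting on $L^2(\mathbb{R}^d)$. Concretely, one needs: the Fundamental Identity of Gabor Analysis for windows in $M^1_s(\mathbb{R}^d)$ or $\mathscr{S}(\mathbb{R}^d)$, which is exactly the associativity condition producing the equivalence bimodule ${_\Lambda}V_{\Lambda^\circ}$ between $C^*(\Lambda,c)$ and $C^*(\Lambda^\circ,\overline c)$ (this is \cite{felu06} for the identity and \cite{co80,lu09} for the bimodule and the Morita--Rieffel equivalence), together with the spectral invariance of $\mathcal{A}^1_s(\Lambda^\circ,\overline c)$ and $\mathcal{A}^\infty(\Lambda^\circ,\overline c)$ in $C^*(\Lambda^\circ,\overline c)$ (\cite{co80,grle04,gr07}). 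None of these results is tied to the two-dimensional case or to product lattices, so this is bookkeeping rather than new mathematics.

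Granting this, I would first invoke the higher-dimensional FIGA to get the Janssen (associativity) representation of the Gabor frame operator, exactly as in the proof of Theorem~\ref{projGabor}:
\[
 \Theta_{g,g}^\Lambda f = {_\Lambda}\langle f,g\rangle\cdot g = f\cdot\langle g,g\rangle_{\Lambda^\circ},
\]
so that $\Theta_{g,g}^\Lambda$ is implemented by right multiplication by the element $\langle g,g\rangle_{\Lambda^\circ}\in\mathcal{A}^1_s(\Lambda^\circ,\overline c)$ (resp.\ $\mathcal{A}^\infty(\Lambda^\circ,\overline c)$). Since $\mathcal{G}(g,\Lambda)$ is a frame, $\Theta_{g,g}^\Lambda$ is a positive invertible operator on $L^2(\mathbb{R}^d)$; because the representation of $C^*(\Lambda^\circ,\overline c)$ on $L^2(\mathbb{R}^d)$ is faithful this forces $\langle g,g\rangle_{\Lambda^\circ}$ to be positive and invertible in $C^*(\Lambda^\circ,\overline c)$, and then spectral invariance together with holomorphic functional calculus (applied to $t\mapsto t^{-1/2}$ on a neighbourhood of the spectrum) places $(\langle g,g\rangle_{\Lambda^\circ})^{-1/2}$ already in $\mathcal{A}^1_s(\Lambda^\circ,\overline c)$, resp.\ $\mathcal{A}^\infty(\Lambda^\circ,\overline c)$.

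Next I would set $\tilde g=(\Theta_{g,g}^\Lambda)^{-1/2}g=g\cdot(\langle g,g\rangle_{\Lambda^\circ})^{-1/2}$ and note that, since the right module action of $\mathcal{A}^1_s(\Lambda^\circ,\overline c)$ (resp.\ $\mathcal{A}^\infty(\Lambda^\circ,\overline c)$) maps $M^1_s(\mathbb{R}^d)$ (resp.\ $\mathscr{S}(\mathbb{R}^d)$) into itself, $\tilde g$ lies in $M^1_s(\mathbb{R}^d)$, resp.\ $\mathscr{S}(\mathbb{R}^d)$. Using selfadjointness of $(\langle g,g\rangle_{\Lambda^\circ})^{-1/2}$ and the sesquilinearity of the right inner product one computes
\[
 \langle\tilde g,\tilde g\rangle_{\Lambda^\circ}
 = (\langle g,g\rangle_{\Lambda^\circ})^{-1/2}\,\langle g,g\rangle_{\Lambda^\circ}\,(\langle g,g\rangle_{\Lambda^\circ})^{-1/2}
 = 1,
\]
so that $\tilde g\in\mathbb{S}(V_{\Lambda^\circ})$. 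Applying the equivalence of conditions in the higher-dimensional Theorem~\ref{projGaborHigher} (namely (iii) with the projection property), one concludes that ${_\Lambda}\langle\tilde g,\tilde g\rangle$ is a projection in $\mathcal{A}^1_s(\Lambda,c)$, resp.\ $\mathcal{A}^\infty(\Lambda,c)$. The main obstacle is the very first step: one must be sure that the cited FIGA, equivalence-bimodule, and spectral-invariance results genuinely transfer to arbitrary lattices in $\mathbb{R}^{2d}$, and one must keep careful track of the opposite-algebra and conjugate-cocycle conventions so that the square root, taken inside $C^*(\Lambda^\circ,\overline c)$, is precisely the one whose module action returns a window in $M^1_s(\mathbb{R}^d)$ or $\mathscr{S}(\mathbb{R}^d)$; once these points are settled no idea beyond the two-dimensional proof is required.
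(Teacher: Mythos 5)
Your proposal is correct and takes essentially the same route as the paper, which establishes Theorem \ref{GabframesHigher} simply by transporting the proof of Theorem \ref{Gabframes} to lattices in $\mathbb{R}^{2d}$: the Janssen (associativity) representation plus spectral invariance of $\mathcal{A}^1_s(\Lambda^\circ,\overline{c})$ and $\mathcal{A}^\infty(\Lambda^\circ,\overline{c})$ in $C^*(\Lambda^\circ,\overline{c})$ put $(\Theta_{g,g}^\Lambda)^{-1/2}$ in the corresponding algebra, so $\tilde{g}=(\Theta_{g,g}^\Lambda)^{-1/2}g$ stays in $M^1_s(\mathbb{R}^d)$ resp.\ $\mathscr{S}(\mathbb{R}^d)$, lies in $\mathbb{S}(V_{\Lambda^\circ})$, and Theorem \ref{projGaborHigher} applies. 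Your extra verifications (inverse-closedness via the faithful unital representation, the computation $\langle\tilde{g},\tilde{g}\rangle_{\Lambda^\circ}=1$) only make explicit steps the paper leaves implicit.
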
 
A tensor product type argument allows one to extend Lyubarskij-Seip's result to lattices of the form $\alpha_1\mathbb{Z}\times\cdots\times\alpha_n\mathbb{Z}\times\beta_1\mathbb{Z}\times\cdots\times\beta_n\mathbb{Z}$.
\begin{theorem}
Let $g_1(t)=2^{d/4}e^{-\pi t^2}$ and $\Lambda=\alpha_1\mathbb{Z}\times\cdots\times\alpha_n\mathbb{Z}\times\beta_1\mathbb{Z}\times\cdots\times\beta_n\mathbb{Z}$. Then ${_\Lambda}\langle g_1,g_1\rangle$ is invertible if and only if $\alpha_i\beta_i<1$ for all $i=1,...,n$. Consequently ${_\Lambda}\langle g_1,g_1\rangle$ is a projection in $\mathcal{A}^\infty(\Lambda,c)$.
\end{theorem}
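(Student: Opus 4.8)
The plan is to factor everything in the statement over the $n$ coordinate pairs and then quote the one--dimensional Lyubarskij--Seip theorem on each factor. After a permutation of coordinates, regroup $\mathbb{R}^{2n}$ into the $n$ symplectic planes $(x_i,\omega_i)$; under the resulting identification $L^2(\mathbb{R}^n)\cong L^2(\mathbb{R})\otimes\cdots\otimes L^2(\mathbb{R})$ the Gaussian factors as $g_1=\bigotimes_{i=1}^n g_1^{(i)}$ with $g_1^{(i)}(t)=2^{1/4}e^{-\pi t^2}$, the lattice as $\Lambda=\Lambda_1\times\cdots\times\Lambda_n$ with $\Lambda_i=\alpha_i\mathbb{Z}\times\beta_i\mathbb{Z}$, the cocycle as $c=\prod_i c_i$ (since $e^{2\pi i x\cdot\eta}=\prod_i e^{2\pi i x_i\eta_i}$), and the time--frequency shifts as $\pi(\lambda)=\bigotimes_i\pi_i(\lambda_i)$. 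First I would record the consequences: $C^*(\Lambda,c)\cong C^*(\Lambda_1,c_1)\otimes\cdots\otimes C^*(\Lambda_n,c_n)$, the smooth torus $\mathcal{A}^\infty(\Lambda,c)\cong\mathcal{A}^\infty(\Lambda_1,c_1)\,\widehat{\otimes}\cdots\widehat{\otimes}\,\mathcal{A}^\infty(\Lambda_n,c_n)$ as a completed tensor product of Fréchet algebras, while on elementary tensors the Gabor frame operator factors as $\Theta^{\Lambda}_{g_1,g_1}=\bigotimes_{i=1}^n\Theta^{\Lambda_i}_{g_1^{(i)},g_1^{(i)}}$ and the module inner product as ${_\Lambda}\langle g_1,g_1\rangle=\bigotimes_{i=1}^n{_{\Lambda_i}}\langle g_1^{(i)},g_1^{(i)}\rangle$; each identity is verified on elementary tensors and extended by boundedness and density.

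Here ``${_\Lambda}\langle g_1,g_1\rangle$ is invertible'' is to be read as: the Gabor frame operator $\Theta^{\Lambda}_{g_1,g_1}$ on $L^2(\mathbb{R}^n)$ is invertible --- equivalently, by its Janssen representation (Theorem~\ref{projGaborHigher}), the positive element $\langle g_1,g_1\rangle_{\Lambda^\circ}\in C^*(\Lambda^\circ,\overline c)$ out of which it is built is invertible --- i.e. $\mathcal{G}(g_1,\Lambda)$ is a Gabor frame for $L^2(\mathbb{R}^n)$. Since a bounded positive operator $\bigotimes_i A_i$ on a Hilbert--space tensor product is invertible if and only if every $A_i$ is, $\Theta^{\Lambda}_{g_1,g_1}$ is invertible exactly when each $\mathcal{G}(g_1^{(i)},\alpha_i\mathbb{Z}\times\beta_i\mathbb{Z})$ is a Gabor frame for $L^2(\mathbb{R})$. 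For the scalar factor I would apply the metaplectic dilation $D_c f(t)=c^{1/2}f(ct)$, which is unitary and satisfies $D_c\pi(x,\omega)D_c^{-1}=\pi(c^{-1}x,c\omega)$, so that $\mathcal{G}(g_1^{(i)},\alpha_i\mathbb{Z}\times\beta_i\mathbb{Z})$ is a frame iff $\mathcal{G}(D_{\alpha_i}g_1^{(i)},\mathbb{Z}\times(\alpha_i\beta_i)\mathbb{Z})$ is, with $D_{\alpha_i}g_1^{(i)}$ again a Gaussian; since the Lyubarskij--Seip theorem underlying Proposition~\ref{JanssenSeip} holds for Gaussian windows of arbitrary width (\cite{ly92,se92-1}), this frame property holds iff $\alpha_i\beta_i<1$. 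Combined with the tensor-product reduction this yields the claimed equivalence ${_\Lambda}\langle g_1,g_1\rangle$ invertible $\iff$ $\alpha_i\beta_i<1$ for all $i$.

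For the ``consequently'' clause, assume all $\alpha_i\beta_i<1$, so $\Theta^{\Lambda}_{g_1,g_1}$ and hence $\langle g_1,g_1\rangle_{\Lambda^\circ}$ are invertible; by spectral invariance (Proposition~\ref{SpectInvNCT}) the inverse square root $(\Theta^{\Lambda}_{g_1,g_1})^{-1/2}$ again lies in $\mathcal{A}^\infty(\Lambda^\circ,\overline c)$, whence the canonical tight atom $\tilde g_1=(\Theta^{\Lambda}_{g_1,g_1})^{-1/2}g_1$ lies in $\mathscr{S}(\mathbb{R}^n)$, and Theorem~\ref{GabframesHigher} shows that ${_\Lambda}\langle\tilde g_1,\tilde g_1\rangle$ is a projection in $\mathcal{A}^\infty(\Lambda,c)$. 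I expect the only genuine obstacle to be the bookkeeping of the tensor-product step: one must match the coordinate ordering of $\Lambda$ with the $n$ symplectic planes, check that the $2$-cocycle --- and therefore the twisted group $C^*$-algebras and their smooth subalgebras --- split as the asserted (completed) tensor products, and confirm that the factorizations of $\Theta^{\Lambda}_{g_1,g_1}$ and of ${_\Lambda}\langle g_1,g_1\rangle$ are compatible with these splittings. Once that is settled, the analytic ingredients (Lyubarskij--Seip for Gaussians, spectral invariance, and the higher-dimensional Wexler--Raz theorem) are quoted directly.
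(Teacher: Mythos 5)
Your argument is correct and is essentially the paper's own: the paper justifies this theorem only by the preceding remark that ``a tensor product type argument allows one to extend Lyubarskij--Seip's result,'' and your factorization of the Gaussian, the lattice, the cocycle, and the frame operator over the $n$ symplectic planes, followed by the dilation reduction to the one-dimensional Gaussian case and by spectral invariance together with Theorem \ref{GabframesHigher} for the projection clause, is exactly that intended route. Your reading of the ``consequently'' clause via the canonical tight atom $\tilde g_1$ is the right interpretation (the Gaussian itself does not generate a tight frame), consistent with the paper's earlier one-dimensional theorems.
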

The preceding theorem characterizes the existence of quantum theta functions for $C^*(\alpha_1\mathbb{Z}\times\cdots\times\alpha_n\mathbb{Z}\times\beta_1\mathbb{Z}\times\cdots\times\beta_n\mathbb{Z},c)$. We refer the reader to Manin's papers \cite{ma01-1,ma04-1,ma04-2}, Marcolli's book \cite{ma05-2} and \cite{luma09} for the definition, basic properties of quantum theta function, their relevance to problems in number theory, and to their interpretation in terms of Gabor analysis.
\par
All results with the exceptions of those involving the functions $g_1,g_2,g_3$ hold in much greater generality, see \cite{lu09}. Namely, in the case that $\Lambda$ is a lattice in $G\times\widehat{G}$ for $G$ a locally compact abelian group and $\widehat{G}$ its Pontryagin dual of $G$.

\end{document}